\documentclass[12pt]{amsart}
\usepackage{amsmath,amsfonts,amsthm,amssymb,mathrsfs, verbatim}
\usepackage{times}
\usepackage{color,ulem}
\textwidth=16cm
\textheight=20cm
\voffset=-1cm
\hoffset=-1.1cm
\theoremstyle{plain}
\newtheorem{theorem}{Theorem}[section]

\newtheorem{lem}[theorem]{Lemma}
\newtheorem{prop}[theorem]{Proposition}
\newtheorem{cor}[theorem]{Corollary}
\theoremstyle{definition}

\newtheorem{defn}[theorem]{Definition}
\newtheorem{Ex}[theorem]{Example}
\theoremstyle{remark}

\newtheorem*{rmk}{Remark}

\newcommand{\pl}{\partial}

\newcommand{\na}{\nabla}

\newcommand{\lt}{\left}
\newcommand{\rt}{\right}
\newcommand{\rw}{\rightarrow}

\newcommand{\R}{\mathbb{R}}

\newcommand{\mxtr}{\mbox{tr}}
\newcommand{\mxRic}{\mbox{Ric}}

\title{Brendle's inequality on static manifolds}
\author{Xiaodong Wang, Ye-Kai Wang}
\address{Xiaodong Wang\\
Department of Mathematics\\
Michigan State University, U.S.} \email{xwang@math.msu.edu}

\address{Ye-Kai Wang\\
Department of Mathematics\\
Michigan State University, U.S.} \email{wangya@math.msu.edu}


\date{}

\begin{document}
\maketitle
\begin{abstract}
We generalize Brendle's geometric inequality considered in \cite{B} to static manifolds. The inequality bounds the integral of inverse mean curvature of an embedded mean-convex hypersurface by geometric data of the horizon. As a consequence, we obtain a reverse Penrose inequality on static asymptotically locally hyperbolic manifolds in the spirit of Chru\'{s}ciel and Simon \cite{CS}.  
\end{abstract}
\section{Introduction}
We say $(M^n,g,f)$ is a {\it static manifold} if $(M,g)$ is a complete Riemannian manifold, $f$ is a positive function on $M$ ($\{ f=0 \} = \pl M$ if $\pl M \neq\phi$), and satisfies the static equations
\begin{align}
f R_{ij} - D_i D_j f + \Delta f g_{ij} =0, \\
\Delta f = -\Lambda f.
\end{align}
Here $\Lambda$ is called the {\it cosmological constant}. By normalization, we assume $\Lambda = \epsilon n$ with $\epsilon \in \{ 1,0,-1\}$. 

Interests in static manifolds are motivated by  general relativity. The (Lorentzian) metric
\[ \bar{g} = -f^2 dt^2 + g \]
solves the (Lorentzian) vacuum Einstein equations with cosmological constant $\Lambda$ on $(-\infty,\infty) \times M$. An extensive study has been done on 3-dimensional static manifolds with zero cosmological constant and asymptotically flat at infinity (see \cite[Assumption 1, page 148]{BM}). The case of black hole boundary is completely classified: $(M,g,f)$ is isometric to the exterior Schwarzschild solution \cite{I, R, BM, Ch}.   
Recently, Cederbaum and Galloway solved the case of photon sphere boundary \cite{CG}. On the other hand, the static $n$-body problem is far from settled, see \cite{BS}. A better understanding of the behavior of the static potential \cite{GM, MT} would be crucial. 

In \cite{B}, Brendle considers a class of warped product manifolds $(M,g)$ that includes the (anti) de Sitter-Schwarzschild manifold. Here $M = [0,\bar{r}) \times N$ and 
\[ g = dr \otimes dr + h(r)^2 g_N. \]
The assumptions Brendle imposes on the warping factor essentially require that $f(r)=h'(r)$ satisfies
\[ f R_{ij} - D_i D_j f + \Delta f g_{ij} \geq 0 \]
and $f(0)=0$. Brendle proves a geometric inequality
\begin{theorem}\cite[Theorem 3.5, 3.11]{B}
Let $\Sigma$ be an embedded hypersurface with positive mean curvature $H$. 
\begin{enumerate}
\item If $\Sigma = \pl \Omega$, then
\begin{align}\label{null-homologous}
(n-1) \int_\Sigma \frac{f}{H} d\mu \ge n \int_\Omega f dvol
\end{align}
\item If $\Sigma$ is homologous to $\{ 0\} \times N$ so that $\pl \Omega = \Sigma \cup \{ 0\} \times N$, then
\begin{align}
(n-1) \int_\Sigma \frac{f}{H} d\mu \ge n\int_\Omega f dvol + h(0)^n vol(N,g_N)
\end{align}
\end{enumerate}
In both cases, the equality holds if and only if $\Sigma$ is umbilical.
\end{theorem}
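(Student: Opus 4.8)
\emph{Approach.} The plan is to adapt the classical proof of the Heintze--Karcher inequality: sweep $\Omega$ out by the inward normal exponential flow off $\Sigma$ and estimate the Jacobian of that flow by AM--GM, but now carrying the warping factor $f=h'$ along as a weight and using the substatic inequality $fR_{ij}-D_iD_jf+\Delta f\,g_{ij}\ge 0$ in every place where the Euclidean argument uses $\mxRic\ge 0$.

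\emph{Setup and covering.} Let $\nu$ be the outward unit normal of $\Sigma=\pl\Omega$, put $\Phi(x,t)=\exp_x(-t\,\nu(x))$, and let $N(x,t)=-\pl_t\Phi$, so $N(x,0)=\nu$. Write $\Sigma_t=\Phi(\cdot,t)$ for the flowed hypersurfaces and $A(t)$, $\mathcal H(t)=\mxtr A(t)$, $J(x,t)$ for their shape operators, mean curvatures and Jacobians; thus $\mathcal H(0)=H$ and $J(\cdot,0)\equiv1$. For each $x$ let $T(x)$ be the smaller of the cut time of $\Phi(x,\cdot)$ relative to $\Sigma$ inside $\ol\Omega$ and --- in case (2) --- the first time $\Phi(x,\cdot)$ meets $\{0\}\times N$. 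The first task is the standard (if slightly technical) fact that on $[0,T(x))$ the map $\Phi$ is a diffeomorphism onto its image with $J>0$, and that $\bigcup_{x}\Phi(x,[0,T(x)])$ covers $\Omega$ up to a set of measure zero: every point of $\Omega$ lies on a minimizing geodesic to $\Sigma$, and since $\{0\}\times N$ is totally geodesic such a geodesic stays inside $\ol\Omega$.

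\emph{The monotone quantity and the two cases.} Along each geodesic one records three facts: $\pl_t\log J=-\mathcal H$; the Riccati inequality $\mathcal H'\ge\frac1{n-1}\mathcal H^2+\mxRic(N,N)$, coming from $|A|^2\ge\frac1{n-1}\mathcal H^2$, with equality iff $\Sigma_t$ is umbilical; and, since $\Phi(x,\cdot)$ is a geodesic, $\frac{d^2}{dt^2}f(\Phi(x,t))=D^2f(N,N)\le f\,\mxRic(N,N)+\Delta f$, the inequality being the substatic condition applied to the unit vector $N$. The heart of the argument is to combine these into a quantity $P(x,t)$ --- modelled on $\frac{n-1}{n}\,\frac{f(\Phi(x,t))\,J(x,t)}{\mathcal H(t)}$ but corrected by lower-order terms dictated by the ODE that $h$, hence $f$, satisfies along the flow --- with $P'\le -f(\Phi(x,t))\,J(x,t)$ and $P(x,0)=\frac{n-1}{n}\,\frac{f(x)}{H(x)}$ (this also forces $\mathcal H>0$ on $[0,T(x))$, for otherwise the decreasing $P$ would blow up). Integrating in $t$, then over $\Sigma$, and using the covering gives
\[
(n-1)\int_\Sigma \frac fH\,d\mu\ \ge\ n\int_\Omega f\,dvol\ +\ n\int_\Sigma P(x,T(x))\,d\mu .
\]
In case (1) one has $P(x,T(x))\ge0$ at the cut locus, which is (1). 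In case (2), along geodesics ending on the horizon $\{0\}\times N$ the quantity $P$ does not tend to $0$: there $f=h'$ and $\mathcal H$ both vanish to first order with $f/\mathcal H\to h(0)/(n-1)$, so $P(x,T(x))\to \frac{h(0)}{n}\,J(x,T(x))$; since $\Phi(\cdot,T(\cdot))$ maps $\Sigma$ onto $\{0\}\times N$, this term integrates to $\frac1n\,h(0)^n\,vol(N,g_N)$, yielding (2). Equality throughout forces equality in $|A|^2\ge\frac1{n-1}\mathcal H^2$ for almost every $(x,t)$, hence $\Sigma_t$, in particular $\Sigma$, is umbilical; conversely a slice $\{r_0\}\times N$ makes every inequality above sharp.

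\emph{Main obstacle.} The step I expect to be the real obstacle is the construction of $P$. With the naive choice $P=\frac{n-1}{n}\,\frac{fJ}{\mathcal H}$, using the Riccati inequality leaves an error $\frac{n-1}{n}\int \frac{J\,\pl_N f}{\mathcal H}\,dt$ of indefinite sign, and to absorb it one must use the warped-product structure beyond the bare positivity of $fR_{ij}-D_iD_jf+\Delta f\,g_{ij}$: how $f$ and $\mxRic(N,N)$ vary along a (generally non-radial) geodesic, which is governed by the ODE for $h$. A secondary difficulty is the horizon analysis in case (2) --- proving $f/\mathcal H\to h(0)/(n-1)$ and that the flow reaches all of $\{0\}\times N$ --- which is precisely where the hypothesis $f(0)=0$ enters. (A different route, modelled on Ros's Reilly-formula proof in the Euclidean case, would solve a weighted elliptic problem attached to $f$ on $\Omega$ and apply a weighted Bochner identity; I expect the flow argument to adapt more transparently to the warped setting.)
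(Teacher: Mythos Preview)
The paper does not prove this statement directly --- it is quoted from Brendle \cite{B} --- but the paper's proof of its generalization (Section~3) follows Brendle's method, so that is the relevant comparison. That method is \emph{not} the $g$-geodesic sweep you propose. Brendle flows inward with respect to the \emph{Fermat metric} $\hat g=f^{-2}g$: one takes the $\hat g$-distance $u(p)=d_{\hat g}(p,\Sigma)$, its level sets $\Sigma_\tau^*$, and exploits the conformal Killing field $X$ of the warped product via the pointwise identity $\mbox{div}_{\Sigma}X^T=(n-1)f-H\langle X,\nu\rangle$, which yields $\int_{\Sigma_\tau^*}\frac{H}{f}\langle X,\nu\rangle\le(n-1)\mu(\Sigma_\tau^*)$. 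Combining this with the $\hat g$-Riccati monotonicity of the flow gives the bulk term; the boundary term $h(0)^n\,vol(N,g_N)$ arises because in the Fermat metric the horizon is at \emph{infinite} distance, and one analyses the limits $\tau\to\infty$ of $\mu(\Sigma_\tau^*)$ and $\langle X,\nu\rangle$ (Lemma~\ref{improved normal vector estimate}, Proposition~3.5 here). The substatic condition enters precisely through the conformal change: it is what makes the $\hat g$-Hessian comparison and the $\hat g$-Riccati inequality close up.

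Your proposal is therefore a genuinely different route, and it contains a gap you yourself flag: you never construct the monotone quantity $P$. With the naive $P=\frac{n-1}{n}\frac{fJ}{\mathcal H}$ along a $g$-geodesic one computes $P'\le -fJ+\frac{J}{\mathcal H}\bigl(\partial_N f-\tfrac{f}{\mathcal H}\,\mxRic(N,N)\bigr)$, and the substatic inequality $f\,\mxRic(N,N)\ge D^2f(N,N)-\Delta f$ does \emph{not} kill the residual term, because $\partial_N f$ and $D^2f(N,N)$ are coupled through a second-order ODE along the geodesic while $\mathcal H$ obeys a separate Riccati equation --- there is no algebraic cancellation. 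Your suggestion to fix this with ``the ODE for $h$'' cannot work along a non-radial $g$-geodesic, since $h$ is a function of $r$ alone and $r$ does not evolve simply along such a curve. This is exactly why Brendle passes to $\hat g$: the conformal change turns the substatic tensor into the piece of $\widehat{\mxRic}$ that drives the $\hat g$-Riccati inequality, and simultaneously reparametrizes so that the weight $f$ is absorbed. Your horizon picture is also off: in the $g$-flow the horizon is reached in finite time and the limit $f/\mathcal H$ along an oblique geodesic need not be $h(0)/(n-1)$; in Brendle's $\hat g$-flow the horizon is at infinity and the boundary term comes from the asymptotics of $\langle X,\nu\rangle\to h(0)^{n-1}\cdot h(0)=h(0)^n$ on $\Sigma_\tau^*$. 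The Reilly-formula alternative you mention at the end (due to Qiu--Xia and Li--Xia in the substatic setting) \emph{does} work and is a legitimate second proof, but that is not the argument you develop.
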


Applications of this inequality include the classification of embedded constant mean curvature hypersurfaces \cite{B} and a sharp Minkowski inequality \cite{BHW} on (anti) de Sitter-Schwarzschild manifolds. 

M. Eichmair observes that the first inequality holds on static manifolds (see the footnote on page 257 of \cite{B}). The main result in this note is a generalization of the second inequality to the static manifolds. 
\begin{theorem}\label{Heintze-Karcher on static}
Let $N$ be a connected component of $\pl M$ and $\Sigma$ be an embedded hypersurface that is homologous to $N$, so that $\pl \Omega = \Sigma \cup N$. Assume that the scalar curvature of $N$ satisfies
\[ R^N > \epsilon n (n-1) \]
and $\Sigma$ has positive mean curvature with respect to the outward normal of $\Omega$. Then
\begin{align}\label{main}
(n-1) \int_\Sigma \frac{f}{H} d\mu \ge n \int_\Omega f \,d \mbox{vol} + \dfrac{(n-1) \kappa}{\max_N \lt( \frac{R^N - \epsilon n (n-1)}{2} \rt)} \cdot \mbox{vol}(N).
\end{align}
Here $\kappa = |df|_g$ on $N$. Moreover, if the equality holds, then $\Sigma$ is umbilic.
\end{theorem}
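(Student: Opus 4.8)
The plan is to adapt Brendle's proof of \cite[Theorem 3.11]{B}, replacing his hypotheses on the warping factor by the static equations and supplying the extra analysis forced by the horizon $N$. First record the facts about $N$ that are used. The static equations give $D_iD_j f = f R_{ij} - \epsilon n f g_{ij}$, and tracing yields $R \equiv \epsilon n(n-1)$ on $M$; since $f$ vanishes on $N$, this forces $D^2 f = 0$ along $N$. Hence $\kappa = |df|_g$ is constant on each component $N$, the hypersurface $N$ is totally geodesic, and the Gauss equation gives $\mathrm{Ric}(\nu,\nu) = \tfrac12(R - R^N) = -\tfrac12\bigl(R^N - \epsilon n(n-1)\bigr)$ on $N$, which is negative by hypothesis; here $\nu = \nabla f/|\nabla f|$. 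These are the only properties of $N$ that enter.

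Let $\nu$ also denote the outward unit normal of $\Omega$ along $\Sigma$ and put $\Phi(x,t) = \exp_x(-t\nu(x))$. As in \cite{B}, mean-convexity of $\Sigma$ yields a set $A = \{(x,t): x\in\Sigma,\ 0<t<\tau(x)\}$ on which $\Phi$ is injective and whose image covers $\Omega$ up to a null set, where $\tau(x)$ is the first time the geodesic $t\mapsto\Phi(x,t)$ exits $\Omega$, hits the focal set, or reaches a parallel hypersurface of vanishing mean curvature; in the present homologous situation the generic such geodesic terminates on $N$. With $J(x,t) = |\det D\Phi(x,t)|$, the area formula gives $\int_\Omega f\,d\mathrm{vol} \le \int_\Sigma\int_0^{\tau(x)} f(\Phi(x,t))\,J(x,t)\,dt\,d\mu(x)$. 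Fix $x$ and write $\gamma(t)=\Phi(x,t)$, $\varphi(t)=f(\gamma(t))$. Since $\gamma$ is a geodesic, $D^2 f = f\,\mathrm{Ric} - \epsilon n f g$ gives $\varphi'' = (\mathrm{Ric}(\gamma',\gamma') - \epsilon n)\varphi$, while the Riccati equation for the shape operator $S_t$ of the parallel hypersurface gives $(\log J)'' = -\mathrm{Ric}(\gamma',\gamma') - \tfrac1{n-1}\mathcal H_t^2 - |\mathring S_t|^2$, with $\mathcal H_t = \mathrm{tr}\,S_t$ and $\mathring S_t$ its trace-free part. Hence, for $w := (fJ)^{1/(n-1)}$, the Ricci terms cancel and
\[
(\log w)'' \ =\ -\frac{\epsilon n}{n-1}\ -\ \frac{1}{n-1}\Bigl(\frac{\varphi'}{\varphi}\Bigr)^{2}\ -\ \frac{\mathcal H_t^2}{(n-1)^2}\ -\ \frac{|\mathring S_t|^2}{n-1}\,,
\]
the last term vanishing precisely when the parallel hypersurface is umbilic at $\gamma(t)$; this cancellation is what replaces Brendle's assumptions on $h$.

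One has $w(0) = f(x)^{1/(n-1)}$, $(\log w)'(0) = -\tfrac1{n-1}\bigl(H(x) + \partial_\nu f(x)/f(x)\bigr)$, and $w(\tau(x)) = 0$ whenever the geodesic reaches $N$ (because $f|_N=0$). Integrating the differential inequality for $\log w$ bounds $\int_0^{\tau(x)} w^{n-1}\,dt = \int_0^{\tau(x)} f(\gamma)\,J\,dt$; the endpoint contribution for geodesics meeting $N$ is governed through $\varphi' = \langle\nabla f,\gamma'\rangle$ with $|\nabla f|\equiv\kappa$ there and, at second order, through $\varphi''$ and $\mathrm{Ric}(\nu,\nu)|_N = -\tfrac12(R^N-\epsilon n(n-1))$. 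Summing over $\Sigma$ — and using the divergence identity $\int_\Sigma\partial_\nu f\,d\mu = \kappa\,\mathrm{vol}(N) - \epsilon n\int_\Omega f\,d\mathrm{vol}$ to handle the part that is not pointwise in $x$ — bounds $\int_\Omega f\,d\mathrm{vol}$ by $\tfrac{n-1}{n}\int_\Sigma\tfrac{f}{H}\,d\mu$ minus a horizon contribution; since $\kappa$ is constant on $N$ and the weight produced there carries $\max_N\tfrac12(R^N - \epsilon n(n-1))$ in its denominator, this contribution is at least $\tfrac1n$ times the term in \eqref{main}, with equality when $R^N$ is constant (as for the (anti)de Sitter--Schwarzschild models, where every coordinate sphere realizes equality). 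Rearranging gives \eqref{main}; and if equality holds throughout then $\mathring S_t\equiv0$ along every geodesic emanating from $\Sigma$, so $\Sigma$ is umbilic.

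The crux is the behaviour on and near $N$: making the covering statement precise in the homologous case; controlling the \emph{a priori} singular behaviour of $w$ as a geodesic approaches $N$, where $\varphi\to0$ and $\varphi'/\varphi\to-\infty$, so that the individual terms in the displayed identity diverge although their combination with $(\log w)''$ remains integrable; organizing the summation over $\Sigma$ so that no pointwise-in-$x$ inequality stronger than the true one is invoked; and the bookkeeping that produces the sharp constant. Handling the three signs of $\epsilon$ uniformly — in particular $\epsilon = -1$, the case relevant to the asymptotically locally hyperbolic application — requires comparing $\log w$ against a hyperbolic rather than a linear function when integrating.
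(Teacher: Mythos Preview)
Your outline departs from the paper's proof in a fundamental way, and the departure creates a genuine gap.

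\textbf{The covering claim is false.} You flow along $g$-geodesics normal to $\Sigma$ and assert that the image of $\Phi$ covers $\Omega$ up to a null set. In the homologous situation $\pl\Omega=\Sigma\cup N$ this is not true: a point $p\in\Omega$ lies on a minimizing geodesic to its nearest point of $\pl\Omega$, but that nearest point may well lie on $N$ rather than on $\Sigma$. When the covered region is a proper subset of $\Omega$, the area formula yields $\int_\Omega f\,d\mathrm{vol}\ \ge\ \int_\Sigma\int_0^{\tau(x)} f(\Phi)J\,dt\,d\mu$, the reverse of the inequality you wrote, and the argument collapses. This is precisely why Brendle (and the paper) work with the Fermat metric $\hat g=f^{-2}g$: in $\hat g$ the horizon sits at infinite distance, so every point of $\Omega$ is reached first from $\Sigma$, and the covering is automatic. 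You flag ``making the covering statement precise'' as a crux, but it is not a technicality to be cleaned up; with $g$-geodesics it simply fails.

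\textbf{The ODE step is only asserted.} Even granting the covering, your displayed identity for $(\log w)''$ does not by itself bound $\int_0^{\tau(x)} w^{n-1}\,dt$ in the required direction; you never say what comparison function you integrate against (for all three signs of $\epsilon$), nor how the endpoint data at $N$ convert into the sharp constant $(n-1)\kappa/\max_N\bigl(\tfrac12(R^N-\epsilon n(n-1))\bigr)$. In the paper this constant does not arise from an ODE at all: it comes from solving the Poisson equation
\[
-\tilde\Delta a^{(0)}-\mathrm{Ric}\Bigl(\tfrac{\pl}{\pl r},\tfrac{\pl}{\pl r}\Bigr)a^{(0)}=(n-1)\kappa
\]
on $N$, building an \emph{approximate} conformal Killing field $X$ near the horizon, and then proving a ``reciprocal inequality'' $\int_{S_j}\tfrac{H}{f}\langle X,\nu\rangle\,d\mu\le (n-1)\mu(S_j)+o(1)$ on level sets of the $\hat g$-distance function (Lemma~3.4), which replaces Brendle's exact conformal-Killing identity from the warped-product case. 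The maximum principle applied to $a^{(0)}$ produces the constant. None of this machinery appears in your sketch, and it is not clear that the $g$-geodesic picture can produce the same constant without it.

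In short: switch to the Fermat metric for the exponential map, and supply the horizon analysis (approximate conformal Killing field, refined normal-vector estimate, reciprocal inequality) that the paper provides in Sections~2--3.
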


Starting from section 3, we study the implications of (\ref{main}) on $(M,g,f)$ with various boundary conditions. When the cosmological constant is negative, a major open problem in general relativity is to classify all asymptotically hyperbolic (see Section 4 for the definition) static manifolds. The only known case is when $\pl M=\phi$, Boucher-Gibbons-Horowitz \cite{BGH}, J. Qing \cite{Q} and the first author \cite{W} showed that the hyperbolic space is the only solution.

We observe that the integral
\[ (n-1) \int_\Sigma \frac{f}{H} d\mu - \int_\Sigma \frac{\pl f}{\pl \nu} d\mu \]
approaches negative ADM mass when $\Sigma$ approaches infinity. Hence, (\ref{main}) can be regarded as a reverse Penrose inequality. 
\begin{cor}\label{reverse Penrose}
Let $(M,g,f)$ be an asymptotically locally hyperbolic static manifold with a connected boundary $N$. Assume (\ref{assumption, R^N}) holds on $N$. Then we have an upper bound for the mass of $(M,g)$:
\begin{align}
m \le \frac{\kappa}{(n-2)\,\omega_{n-1}} \lt( 1 - \frac{n-1}{\max_N \lt( \frac{R^N + n(n-1)}{2} \rt)}\rt) \, \mbox{vol}(N), 
\end{align}
where $\omega_{n-1}$ is the volume of the $(n-1)$-dimensional unit sphere.
\end{cor}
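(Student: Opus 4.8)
\medskip
\noindent\textbf{Proof idea.}
The plan is to apply Theorem~\ref{Heintze-Karcher on static} to an exhaustion of the end of $M$ by large coordinate hypersurfaces $\Sigma_r$ and let $r\to\infty$. Since $(M,g,f)$ is asymptotically locally hyperbolic we are in the case $\epsilon=-1$, so that $\epsilon n(n-1)=-n(n-1)$ and the hypothesis (\ref{assumption, R^N}) reads $R^N+n(n-1)>0$; in particular $\max_N\big(\tfrac{R^N+n(n-1)}{2}\big)>0$ and division by it is legitimate. The first step is to turn the bulk term of (\ref{main}) into boundary integrals. From $\Delta f=-\Lambda f=nf$, and $\pl\Omega=\Sigma\cup N$ with $f\equiv 0$ on $N$, the divergence theorem gives
\begin{align*}
n\int_\Omega f\,d\mathrm{vol}=\int_\Omega\Delta f\,d\mathrm{vol}=\int_\Sigma\frac{\pl f}{\pl\nu}\,d\mu-\kappa\,\mathrm{vol}(N),
\end{align*}
the sign in the $N$-integral being forced by the fact that $f>0$ in the interior, so that the outward unit normal of $\Omega$ along $N$ equals $-\na f/|\na f|$ and hence $\pl f/\pl\nu=-|\na f|=-\kappa$ there. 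Substituting into (\ref{main}) and rearranging, Theorem~\ref{Heintze-Karcher on static} takes the form
\begin{align}\label{rev-pen-rearranged}
(n-1)\int_\Sigma\frac{f}{H}\,d\mu-\int_\Sigma\frac{\pl f}{\pl\nu}\,d\mu\ \ge\ -\,\kappa\,\mathrm{vol}(N)\left(1-\frac{n-1}{\max_N\big(\tfrac{R^N+n(n-1)}{2}\big)}\right),
\end{align}
whose right-hand side no longer depends on $\Sigma$.

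The main point is then to compute $\lim_{r\to\infty}$ of the left-hand side of (\ref{rev-pen-rearranged}). Here I would invoke the asymptotically locally hyperbolic structure set up in Section~4: near the end, $g$ and the static potential $f$ have controlled expansions in a boundary defining function, and the static equations force the subleading terms of $f$ to be determined by the mass aspect of $g$ --- so that $f$ blows up at conformal infinity at a definite rate, with the next-order coefficient carrying the mass (compare the model $f=\cosh r$ on hyperbolic space). Expanding $H(\Sigma_r)$, the induced measure $d\mu$ and $\pl f/\pl\nu$ in this chart, one checks that the separately divergent leading contributions of $(n-1)\int_{\Sigma_r}f/H\,d\mu$ and of $\int_{\Sigma_r}\pl f/\pl\nu\,d\mu$ cancel and that the finite remainder is precisely the Hamiltonian mass integral of Chru\'{s}ciel--Herzlich; with the normalization of $m$ fixed in Section~4 this reads
\begin{align*}
\lim_{r\to\infty}\left((n-1)\int_{\Sigma_r}\frac{f}{H}\,d\mu-\int_{\Sigma_r}\frac{\pl f}{\pl\nu}\,d\mu\right)=-(n-2)\,\omega_{n-1}\,m .
\end{align*}
Passing to the limit in (\ref{rev-pen-rearranged}) and dividing by $(n-2)\omega_{n-1}>0$ then yields the stated upper bound for $m$.

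The step I expect to be the main obstacle is this asymptotic computation: one must commit to the precise notion of an asymptotically locally hyperbolic end and of its mass, derive the asymptotics of $f$ that the static equations impose, and check --- watching the orientation of $\nu$ and keeping careful track of which remainder terms are $o(1)$ --- that the two divergent integrals have opposite leading parts, so that their difference has a finite limit equal to $-(n-2)\omega_{n-1}m$. By comparison, the algebraic steps (the specialization to $\epsilon=-1$, the integration by parts converting the bulk term, the positivity of the curvature factor, and the final rearrangement) are routine once Theorem~\ref{Heintze-Karcher on static} is in hand.
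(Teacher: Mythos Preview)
Your proposal is correct and follows essentially the same route as the paper: apply Theorem~\ref{Heintze-Karcher on static} to an exhaustion by large hypersurfaces, convert the bulk term via $\Delta f=nf$ and the divergence theorem to obtain exactly your inequality (\ref{rev-pen-rearranged}), and then pass to the limit at conformal infinity to recover $-(n-2)\omega_{n-1}m$ on the left. The only substantive difference is that the paper carries out the asymptotic step explicitly---working with a defining function $\rho\to 0$ rather than $r\to\infty$, it uses the expansions $f=\rho^{-1}+\tfrac{k\rho}{4}+\tfrac{\alpha}{2}\rho^{n-1}+o(\rho^{n-1})$ and $h_\rho=(1-\tfrac{k\rho^2}{4})^2 g_0+\tfrac{\tau}{n}\rho^n+o(\rho^n)$ (derived in \cite{W} from the static equations) to compute $(n-1)f/H-\pl f/\pl\nu$ pointwise and verify your claimed limit against the mass definition $m=\tfrac{1}{2(n-1)\omega_{n-1}}\int \mathrm{tr}_{g_0}\tau\,d\mu_{g_0}$.
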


Chru\'{s}ciel and Simon \cite{CS} suggest that such inequality can be combined with a Penrose-type inequality to obtain uniqueness results of static manifolds. We refer to \cite{LN} for a realization of this program when the mass aspect function is nonpositive.

Let us turn to the case of positive cosmological constant. Besides the hemisphere with standard round metric and $f = \cos\theta$, we have the family of de Sitter-Schwarzschild manifolds as models. It is a warped product with $M = [s_1,s_2] \times S^{n-1}$ and 
\[ g = \frac{1}{f^2} ds^2 + s^2 g_0 \]
where $f = \sqrt{1-s^2 - m s^{2-n}}$ and $s_1,s_2$ are two positive roots of $f$. The equator of the hemisphere and $\{ s=s_2\}$ in de Sitter-Schwarzschild manifold are called cosmological horizons in physics literature. The presence of them makes the classification problem qualitatively different from that of zero or negative cosmological constant. We recommend the introduction of \cite{A} for the current status in dimension 3. Here we only mention the main result in \cite{A}: 
\begin{theorem}\cite[Theorem D]{A}
Let $(M^3,g,f)$ be a compact simply connected static manifold with connected boundary and scalar curvature 6. If
\[ |\pl M| \ge \frac{4\pi}{3}, \]
then $(M^3,g,f)$ is equivalent to the standard hemisphere.
\end{theorem}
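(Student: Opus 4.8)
Normalizing so that $\epsilon=1$ and $n=3$, the static equations read $D_iD_jf=f\,(R_{ij}-3g_{ij})$ and $\Delta f=-3f$, and tracing gives $R=6$. I would first record the geometry of the horizon $\partial M=\{f=0\}$. On $\partial M$ the static equation forces $D_iD_jf=0$, so $\partial M$ is totally geodesic; moreover $\nabla\big(|df|^2+3f^2\big)=2f\,\mxRic(\nabla f,\cdot)$ vanishes on $\{f=0\}$, so $\kappa=|df|$ is a positive constant along the connected surface $\partial M$. Since $M$ is simply connected with connected boundary, a Poincar\'e--Lefschetz duality computation forces $\partial M\cong S^2$. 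Finally, writing $E:=\mxRic-2g$ for the trace-free Ricci tensor (here $R/3=2$), the static equation takes the clean form $D_iD_jf-\tfrac{1}{3}\Delta f\,g_{ij}=f\,E_{ij}$.

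The backbone of the argument is an integral identity. Since $R\equiv 6$ is constant, the contracted Bianchi identity gives $\nabla^iR_{ij}=0$, so integrating by parts and using $\nabla f=-\kappa\nu$ on $\partial M$ (with $\nu$ the outward normal) yields
\[
\int_M\langle D^2f,\mxRic\rangle\,dV=\int_{\partial M}\mxRic(\nu,\nabla f)\,dA=-\kappa\int_{\partial M}\mxRic(\nu,\nu)\,dA.
\]
Pairing the trace-free static equation with $E$ and using $\langle D^2f,g\rangle=\Delta f$ together with $\int_M\Delta f=\int_{\partial M}\partial_\nu f\,dA=-\kappa\,|\partial M|$, this becomes
\[
\int_M f\,|E|^2\,dV=-\kappa\int_{\partial M}\mxRic(\nu,\nu)\,dA+2\kappa\,|\partial M|.
\]
On the totally geodesic sphere $\partial M$ the Gauss equation gives $K=3-\mxRic(\nu,\nu)$, so Gauss--Bonnet ($\int_{\partial M}K\,dA=4\pi$) yields $\int_{\partial M}\mxRic(\nu,\nu)\,dA=3|\partial M|-4\pi$. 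Substituting, I obtain the sharp identity
\[
\int_M f\,|E|^2\,dV=\kappa\,\big(4\pi-|\partial M|\big).
\]
Because $f>0$ in the interior and $\kappa>0$, this already proves $|\partial M|\le 4\pi$, with equality exactly when $E\equiv 0$; an Einstein metric with $R=6$ on a simply connected $M$ with totally geodesic boundary is the round hemisphere.

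The hypothesis $|\partial M|\ge 4\pi/3$ enters through the sharp area bound for \emph{stable} minimal two-spheres under $R\ge 6$: testing the stability inequality with $\phi\equiv 1$ and using the Gauss equation gives $3|\Sigma|-4\pi+\tfrac12\int_\Sigma|A|^2\le 0$, hence $|\Sigma|\le 4\pi/3$, with equality forcing $A\equiv 0$ and $\mxRic(\nu,\nu)\equiv 3$ along $\Sigma$. The plan is then by contradiction: if $M$ is not the hemisphere, the backbone identity gives $E\not\equiv 0$ and $|\partial M|<4\pi$, and I would locate an interior \emph{stable} minimal two-sphere $\Sigma_*$ to force $|\partial M|\le 4\pi/3$. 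The delicate point is that the horizon is itself minimal but unstable once $|\partial M|>4\pi/3$ (its stability form on constants equals $4\pi-3|\partial M|<0$), so it cannot serve as the test surface; instead I would minimize area among surfaces enclosing the interior maximum $p$ of $f$ — where $H_2(M\setminus\{p\})\cong\mb{Z}$ supplies a nontrivial class — and analyze the resulting stable sphere together with the foliation of $M$ by the level sets $\{f=t\}$, whose geometry is governed by the Riccati equation and the relation $\partial_\nu|df|+|df|\,H=-3f$.

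The principal obstacle is exactly this tension: stability caps the area at $4\pi/3$, while the unstable horizon permits area up to $4\pi$, and the only surfaces produced with area $\ge|\partial M|$ (e.g.\ by min--max across sweepouts issuing from $\partial M$) have index one, so the stable bound does not apply to them. Reconciling these therefore requires the \emph{full} static structure, not merely $R\ge 6$: one must show that under $|\partial M|\ge 4\pi/3$ the only configuration compatible with both the backbone identity and the stable bound is $E\equiv 0$. I would carry this out by propagating the equality case — at a leaf where the stable bound is saturated, $A\equiv 0$ and $\mxRic(\nu,\nu)\equiv 3$; feeding this into the evolution equations for the second fundamental form and for $|df|$ along $\nabla f$, together with $\Delta f=-3f$, should force $E\equiv 0$ on an open set and hence, by unique continuation for the static system, on all of $M$. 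The equality case of the backbone identity then returns $|\partial M|=4\pi$ and identifies $(M,g,f)$ with the standard hemisphere.
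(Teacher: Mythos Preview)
The paper does not prove this theorem at all: it is quoted from Ambrozio \cite{A} purely as context for the positive-cosmological-constant discussion, and no argument is given here. So there is no ``paper's own proof'' to compare against; what follows is an assessment of your proposal on its own terms.

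Your integral identity
\[
\int_M f\,|E|^2\,dV=\kappa\,(4\pi-|\partial M|)
\]
is correct and is the well-known Shen--Robinson type formula for static three-manifolds with $R=6$. It cleanly gives $|\partial M|\le 4\pi$ with equality exactly on the hemisphere. However, this is \emph{not} the content of the cited theorem, whose point is the much sharper threshold $4\pi/3$.

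The genuine gap is everything after the backbone identity. Your plan to produce an interior \emph{stable} minimal two-sphere and invoke the $R\ge 6$ area bound $|\Sigma|\le 4\pi/3$ never makes contact with $|\partial M|$: the bound constrains $|\Sigma_*|$, not the horizon. You yourself note that the natural variational constructions (min--max from the unstable horizon) output index-one spheres, to which the stable bound does not apply; and the existence argument via minimizing in $H_2(M\setminus\{p\})$ is not justified (the minimizer could collapse onto $\partial M$ or the puncture). The closing paragraph --- ``propagate the equality case along the $f$-foliation and use unique continuation to force $E\equiv 0$'' --- is a hope rather than an argument: you have not exhibited a leaf on which equality in the stable bound actually holds, nor explained which evolution equations would then force $E=0$ in a neighborhood. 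As written, the hypothesis $|\partial M|\ge 4\pi/3$ is never actually used except to declare the horizon unstable, and instability alone does not furnish the comparison you need. The $4\pi/3$ threshold is the Nariai value, and Ambrozio's proof exploits the full static structure (via level-set analysis of $f$ and a carefully designed monotone quantity) in a way your sketch does not capture.
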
   

Motivating by de Sitter-Schwarzschild manifolds, we relate the geometric quantity of two horizons in a static manifold. \begin{cor}\label{compare 2 horizons} Let $(M,g,f)$ be a static manifold with positive cosmological constant. Suppose that $M$ is diffeomorphic to $N \times [s_1, s_2]$. Let $N_1 = N \times \{ s_1\}$ and $N_2 = N \times \{ s_2\}$ be the two connected components of $\pl M$. Assume that \[ R^{N_1} > n(n-1)
\quad\mbox{ and} \quad R^{N_2}< n(n-1). \] Then we have
\begin{align}
\kappa_2 \int_{N_2} \lt( \dfrac{2(n-1)}{n(n-1) - R^{N_2}} -1 \rt) d\mu  \ge \kappa_1 \lt( \frac{2(n-1)}{\max_{N_1} \lt( R^{N_1} - n(n-1) \rt)} +1 \rt) \cdot vol(N_1).
\end{align}
Here $\kappa_1$ and $\kappa_2$ are the surface gravity $|Df|_g$ on $N_1$ and $N_2$ respectively.
\end{cor}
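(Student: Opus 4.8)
The plan is to apply Theorem \ref{Heintze-Karcher on static} twice, using $\Sigma = N_2$ the first time and $\Sigma = N_1$ the second time, and then combine the two resulting inequalities so that the bulk term $n\int_\Omega f\,d\mathrm{vol}$ cancels. Concretely, take $\Omega = M$ (so $\partial\Omega = N_1 \cup N_2$). First I would verify that $N_2$, as a hypersurface homologous to $N_1$, is a legitimate competitor in Theorem \ref{Heintze-Karcher on static}: its mean curvature with respect to the outward normal of $\Omega = M$ is zero, since $N_2$ is a component of $\partial M$ and $f$ vanishes there, so $N_2$ is totally geodesic by the static equations. The case $H \equiv 0$ is the degenerate boundary case; one checks (as in Brendle's proof, and as is implicit in how Theorem \ref{Heintze-Karcher on static} is set up) that inequality \eqref{main} still holds in the limit $H \to 0^+$, with the left-hand side $\int_\Sigma f/H\,d\mu$ interpreted appropriately — but in fact since $f|_{N_2} = 0$ as well, the product $f/H$ needs care. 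The cleaner route is to run Theorem \ref{Heintze-Karcher on static} with the roles reversed in each of the two applications: in application one, $N = N_1$ is the "horizon" and $\Sigma = N_2$; in application two, $N = N_2$ is the "horizon" and $\Sigma = N_1$. For this we must check the sign hypothesis $R^N > \epsilon n(n-1)$ in application one (this is $R^{N_1} > n(n-1)$, given) and in application two we need $R^{N_2} > n(n-1)$, which is the \emph{opposite} of what is assumed.

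This sign discrepancy is exactly why the corollary is not a literal double application of the theorem, and I expect it to be the main obstacle. The resolution: when $R^{N_2} < n(n-1)$, one re-examines the proof of Theorem \ref{Heintze-Karcher on static} with $\Sigma = N_1$ and the component $N_2$ playing the role of the boundary piece. The boundary term in \eqref{main} comes from integrating a Heintze–Karcher-type monotone quantity along the level sets of $f$ (or along the normal flow) and evaluating the flux through $N_2$; the coefficient $\dfrac{(n-1)\kappa}{\max_N \frac{R^N - \epsilon n(n-1)}{2}}$ is the value of that flux, and its derivation uses $R^N - \epsilon n(n-1) > 0$ to control the second fundamental form of the nearby level sets from one side. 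When instead $R^{N_2} - n(n-1) < 0$, the same computation produces the reversed inequality with the reversed denominator $n(n-1) - R^{N_2}$, and one also picks up a $+1$ rather than a $-1$ from the comparison between $f/H$ and $\partial f/\partial\nu$ terms — this accounts for the asymmetry between the $(\cdots - 1)$ factor on the left and the $(\cdots + 1)$ factor on the right of the asserted inequality. So the first step of real content is: state and prove the companion inequality to \eqref{main} valid when $R^N < \epsilon n(n-1)$, which will read
\begin{align*}
(n-1)\int_\Sigma \frac{f}{H}\,d\mu \le n\int_\Omega f\,d\mathrm{vol} + \kappa\int_N \lt( \frac{2(n-1)}{\epsilon n(n-1) - R^N} + 1 \rt) d\mu,
\end{align*}
obtained by the same Reilly/Heintze–Karcher argument with the inequalities flipped.

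Granting that companion inequality, the rest is bookkeeping. Apply Theorem \ref{Heintze-Karcher on static} with $\Sigma = N_2$, $N = N_1$, $\Omega = M$: since $H = 0$ on $N_2$, the left-hand side is $+\infty$ unless one takes the limit carefully; the correct statement in this degenerate case (again following Brendle's treatment of horizon boundaries) is that the inequality degenerates to $0 \ge n\int_M f\,d\mathrm{vol} + \dfrac{(n-1)\kappa_1}{\max_{N_1}\frac{R^{N_1} - n(n-1)}{2}}\mathrm{vol}(N_1)$ being \emph{false}, so instead I should not send $\Sigma$ all the way to $N_2$ but rather keep $\Sigma = N_1$ in the companion inequality with $N = N_2$. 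Thus: apply the companion inequality with $\Sigma = N_1$, $N = N_2$, $\Omega = M$, and separately apply Theorem \ref{Heintze-Karcher on static} with $\Sigma = N_1$(wait — $N_1$ is a horizon so $H=0$ there too). The genuinely correct setup, which I would pin down in the write-up, is to interpret both $N_1$ and $N_2$ as horizons and read off the flux identity directly: the Heintze–Karcher quantity is conserved (monotone) between the two ends, its value near $N_1$ is $\kappa_1\lt(\frac{2(n-1)}{\max_{N_1}(R^{N_1}-n(n-1))} + 1\rt)\mathrm{vol}(N_1)$ plus $n\int$ of $f$, and its value near $N_2$ is $\kappa_2 \int_{N_2}\lt(\frac{2(n-1)}{n(n-1) - R^{N_2}} - 1\rt)d\mu$; monotonicity in the direction of increasing $f$ then gives the stated inequality. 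I would therefore organize the proof as: (i) establish the companion inequality for $R^N < \epsilon n(n-1)$; (ii) observe both boundary components are totally geodesic with $f = 0$; (iii) combine the flux values at the two ends via the bulk-term cancellation, tracking the $\pm 1$ signs, to obtain the result.
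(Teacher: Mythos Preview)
Your proposal misses the actual mechanism and ends up chasing a much harder route than necessary. The paper's proof applies Theorem \ref{Heintze-Karcher on static} exactly \emph{once}, with $N = N_1$ (which satisfies $R^{N_1} > n(n-1)$) and $\Sigma = \Sigma_r$, the level set $\{r = \text{const}\}$ of the distance function from $N_2$, for small $r>0$. The point you never reach is that although $f$ and $H$ both vanish on $N_2$, they vanish at the \emph{same rate}: near $N_2$ one has $f = \kappa_2 r + O(r^3)$ and, since $R^{N_2} < n(n-1)$ forces $\mathrm{Ric}(\partial_r,\partial_r) > 0$, the mean curvature of $\Sigma_r$ with respect to the outward normal of $\Omega$ satisfies $H = \tfrac{n(n-1)-R^{N_2}}{2}\, r + O(r^2)$. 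Hence $(n-1)\int_{\Sigma_r} f/H\, d\mu \to \kappa_2 \int_{N_2} \tfrac{2(n-1)}{n(n-1)-R^{N_2}}\, d\mu$ as $r\to 0$. Meanwhile $n\int_\Omega f \to n\int_M f$, and since $\Delta_g f = -nf$ the divergence theorem gives $n\int_M f = \kappa_1\,\mathrm{vol}(N_1) + \kappa_2\,\mathrm{vol}(N_2)$. Substituting and moving $\kappa_2\,\mathrm{vol}(N_2)$ to the left produces the stated inequality immediately.

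Your proposed ``companion inequality'' for the case $R^N < \epsilon n(n-1)$ is not available from the paper's methods: the construction of the approximate conformal Killing field in Proposition \ref{approx conf Killing} hinges on solving $-\tilde\Delta a^{(0)} - \mathrm{Ric}(\partial_r,\partial_r)\,a^{(0)} = (n-1)\kappa$ with a positive solution via the maximum principle, which fails when $\mathrm{Ric}(\partial_r,\partial_r) > 0$. So there is no reversed Heintze--Karcher inequality to invoke at $N_2$, and none is needed. Likewise, applying the theorem with $\Sigma$ equal to either horizon is a dead end (as you noticed, $H\equiv 0$ there); the fix is not a second application or a flux-monotonicity argument, but simply to take $\Sigma$ to be a nearby level set and pass to the limit.
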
     

The paper is organized as follows. After reviewing basic facts on static manifolds (Section 2), we prove Theorem \ref{Heintze-Karcher on static} in Section 3. As applications to Theorem \ref{Heintze-Karcher on static}, we prove Corollary \ref{reverse Penrose} in Section 4 and Corollary \ref{compare 2 horizons} in Section 5 respectively. 

{\bf Acknowledgements.} The first author is partially supported by Simons Foundation Collaboration Grant for Mathematicians \#312820. The second author would like to thank Professor Mu-Tao Wang for his constant encouragement.

\section{Geometry near the horizon}
We assume that $(M,g,f)$ is an $n$-dimensional static manifold with \underline{connected} boundary $N$.  Let $D$ be the Levi-Civita connection of $g$. The static equations now read
\begin{align*}
f(R_{\alpha\beta} - n \epsilon g_{\alpha\beta}) = D_\alpha D_\beta f \\
\Delta_g f = -n\epsilon f
\end{align*}
where $\epsilon \in \{ 1,0,-1 \}$. Static equations imply that $R =  \epsilon n(n-1)$ and $|Df|$ is a positive constant $\kappa$ on $N$. Moreover, $N$ is totally geodesic.
\begin{defn}
The constant $\kappa$ is called the {\it surface gravity} of the horizon $N$.
\end{defn}

 Let $r$ be the distance function from $N$. We fix a tubular neighborhood $U_N$ of $N$ that is diffeomorphic to $N \times [0,r_0)$. Fixing a coordinate system $\{ x^i \}$ on $N$, we henceforth work on $N \times [0,r_0)$ with coordinates $\{ r,x^i\}$ instead of $U_N$. The expansion of the static potential is given by \begin{align}\label{exp, f}
 f = \kappa r + O(r^3).
\end{align}
We can choose $r_0$ sufficiently small such that $f \ge \frac{1}{2} \kappa r$ in $N \times [0, r_0)$.

Let $\tilde{g}$ denote the induced metric of $N$. Since $N$ is totally geodesic, the metric has the expansion
\begin{align}\label{exp, metric}
g = dr^2 + \tilde{g} + r^2 g^{(2)} + O(r^3),
\end{align} where $g^{(2)}$ is a symmetric two-tensor on $N$. Direct computation shows
$R_{irjr}|_{r=0} = -g^{(2)}_{ij}$ and hence 
\[ \mxtr_{\tilde{g}} g^{(2)} = -\mxRic \lt( \frac{\pl}{\pl r}, \frac{\pl}{\pl r} \rt).\] 

We assume
\begin{align}\label{assumption, R^N}
R^N > \epsilon n(n-1).
\end{align}
By Gauss formula,
\[ R^N = R - 2 \mbox{Ric}\lt( \frac{\pl}{\pl r}, \frac{\pl}{\pl r} \rt) + H^2 - |A|^2, \]
and hence (\ref{assumption, R^N}) is equivalent to 
\begin{align}\label{assumption, Ric}
\mxRic \lt( \frac{\pl}{\pl r}, \frac{\pl}{\pl r} \rt) < 0 \quad \mbox{on } N.
\end{align}
By the second variation formula of mean curvature, 
\[ \frac{\pl H}{\pl r} = - |A|^2  - \mbox{Ric}(\frac{\pl}{\pl r}, \frac{\pl}{\pl r}), \]
and the fact that $N$ is totally geodesic, the level sets of $r$ have positive mean curvature with respect to $\frac{\pl}{\pl r}$ for $r$ sufficiently small.
 
The geometry near the horizon admits an approximate conformal Killing vector field.
\begin{prop}\label{approx conf Killing}
There exists a vector field $X$ on $N \times [0,r_0)$ satisfying
\begin{align}\label{conf Killing eq}
\begin{split}
\lt\langle D_{\frac{\pl}{\pl r}} X, \frac{\pl}{\pl r} \rt\rangle &= f + O(r^2), \\
\lt\langle  D_{\frac{\pl}{\pl r}} X, E_i \rt\rangle  + \lt\langle D_{E_i} X, \frac{\pl}{\pl r} \rt\rangle &= O(r^2), \\
\sum_{i=1}^{n-1} \lt\langle D_{E_i} X, E_i \rt\rangle  &=(n-1) f + O(r^2),
\end{split}
\end{align}
where $\{ E_i \}$ are orthonormal frames on $N \times \{ r \}$. Moreover, $X$ has the expansion
\begin{align}
X = \lt( a^{(0)} + \frac{1}{2} \kappa r^2 \rt) \frac{\pl}{\pl r} + r b^i \frac{\pl}{\pl x^i}	+ O(r^3)
\end{align}
where $a^{(0)}$ is a function on $N$ with
\begin{align} \label{min a}
\min_N a^{(0)} \ge \dfrac{(n-1) \kappa}{\max_N \lt( -\mxRic \lt( \frac{\pl}{\pl r}, \frac{\pl}{\pl r}\rt) \rt)},
\end{align}
and $b^i \frac{\pl}{\pl x^i}$ is a vector field on $N$. 
\end{prop}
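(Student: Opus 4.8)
The plan is to construct $X$ directly in the stated form and let the three conditions of \eqref{conf Killing eq} determine the unknowns. I would look for
\[ X = \lt( a^{(0)}(x) + \tfrac12 \kappa r^2 \rt) \frac{\pl}{\pl r} + r\, b^i(x) \frac{\pl}{\pl x^i}, \]
with $a^{(0)}$ a function and $b = b^i\frac{\pl}{\pl x^i}$ a vector field on $N$, both to be found; the $O(r^3)$ freedom plays no role at the order required. Everything is computed in the Fermi coordinates $(r,x^i)$ of \eqref{exp, metric}, where $g = dr^2 + g_{ij}\,dx^i dx^j$ with $g_{ij} = \tilde g_{ij} + r^2 g^{(2)}_{ij} + O(r^3)$; the relevant Christoffel symbols are $\Gamma^r_{rr} = \Gamma^r_{ri} = \Gamma^i_{rr} = 0$, $\Gamma^r_{ij} = -\tfrac12\pl_r g_{ij} = O(r)$, and $\Gamma^k_{ri} = \tfrac12 g^{km}\pl_r g_{mi} = O(r)$; recall also from \eqref{exp, f} that $f = \kappa r + O(r^3)$.

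Plugging the ansatz into \eqref{conf Killing eq} and expanding in $r$: the first expression is $\lt\langle D_{\pl/\pl r}X,\pl/\pl r\rt\rangle = \pl_r\bigl(a^{(0)} + \tfrac12\kappa r^2\bigr) = \kappa r = f + O(r^3)$, so the first equation holds for \emph{any} $a^{(0)}$ (this is also what forces the coefficient $\tfrac12\kappa$). For the second, a short computation gives $\lt\langle D_{\pl/\pl r}X, E_i\rt\rangle = b_i + O(r^2)$ and $\lt\langle D_{E_i}X, \pl/\pl r\rt\rangle = \pl_i a^{(0)} + O(r^2)$, so the second equation is equivalent to $b = -\na_{\tilde g}a^{(0)}$. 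For the third, using
\[ \sum_i \lt\langle D_{E_i}X, E_i\rt\rangle = \tfrac12\bigl(a^{(0)} + \tfrac12\kappa r^2\bigr)\pl_r\log\det g_{(r)} + \operatorname{div}_{g_{(r)}}(r b), \]
together with $\pl_r\log\det g_{(r)} = 2r\,\mxtr_{\tilde g}g^{(2)} + O(r^2)$ and $\mxtr_{\tilde g}g^{(2)} = -\mxRic(\pl_r,\pl_r)\big|_{r=0} =: \rho$, the third expression equals $r\bigl(\rho\,a^{(0)} + \operatorname{div}_{\tilde g}b\bigr) + O(r^2)$; since $(n-1)f = (n-1)\kappa r + O(r^3)$, the third equation is equivalent to $\rho\,a^{(0)} + \operatorname{div}_{\tilde g}b = (n-1)\kappa$ on $N$. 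Combining the last two, $(a^{(0)},b)$ works exactly when $b = -\na_{\tilde g}a^{(0)}$ and
\[ -\Delta_{\tilde g}a^{(0)} + \rho\,a^{(0)} = (n-1)\kappa \qquad\text{on } N. \]

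It then remains to solve this equation and to read off \eqref{min a}. By \eqref{assumption, Ric}, $\rho = -\mxRic(\pl_r,\pl_r) > 0$ on the closed manifold $N$, so the Schr\"odinger operator $-\Delta_{\tilde g} + \rho$ has positive definite quadratic form $\phi\mapsto\int_N\bigl(|\na\phi|^2 + \rho\,\phi^2\bigr)$ and is therefore invertible; this yields a unique smooth $a^{(0)}$, and one sets $b = -\na_{\tilde g}a^{(0)}$. For the bound, evaluate the equation at a minimum point $p$ of $a^{(0)}$: there $\Delta_{\tilde g}a^{(0)}(p)\ge 0$, hence $(n-1)\kappa\le\rho(p)\,a^{(0)}(p)\le\bigl(\max_N\rho\bigr)\min_N a^{(0)}$; since $\kappa>0$ and $\rho>0$ this forces $\min_N a^{(0)} > 0$ and then $\min_N a^{(0)}\ge (n-1)\kappa/\max_N\rho$, which is \eqref{min a}. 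The only step that is not purely mechanical is the Fermi-coordinate bookkeeping showing that the middle equation of \eqref{conf Killing eq} forces $b = -\na_{\tilde g}a^{(0)}$; once this structural fact is in place, the a priori underdetermined system for $(a^{(0)},b)$ collapses to a single well-posed elliptic scalar equation on $N$, and existence of $X$ together with the sharp bound follow from standard elliptic theory and the maximum principle.
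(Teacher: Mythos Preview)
Your argument is correct and follows essentially the same route as the paper. Both proofs plug a Taylor-in-$r$ ansatz for $X$ into the three approximate conformal Killing conditions, reduce to the single scalar equation $-\Delta_{\tilde g}a^{(0)} - \mxRic(\partial_r,\partial_r)\,a^{(0)} = (n-1)\kappa$ on $N$, and then use positivity of the potential together with the maximum principle for solvability and for the lower bound \eqref{min a}. Your version is slightly more streamlined in that you start directly with the ansatz asserted in the statement rather than a general second-order expansion, and you make explicit the relation $b=-\nabla_{\tilde g}a^{(0)}$ coming from the mixed equation; the paper records the analogous relation as $b^{(1)}_i = \partial_i a^{(0)}$ (a sign discrepancy that is immaterial since either choice yields the same Poisson equation for $a^{(0)}$ and the proposition does not pin down $b$).
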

\begin{proof}
Suppose $X$ is given by the expansion
\begin{align*}
X = \lt( a^{(0)} + r a^{(1)} + r^2 a^{(2)} \rt) \frac{\pl}{\pl r} + \lt( b^{(0)i} + r b^{(1)i} + r^2 b^{(2)i} \rt) \frac{\pl}{\pl x^i} + O(r^3)
\end{align*}
where $a^{(\alpha)}, b^{(\alpha)i}$ are functions and tensors on $N$.

Equations (\ref{conf Killing eq} ) imply the following equations on $N$:
\begin{align*}
a^{(1)} =0, \qquad \frac{\pl a^{(0)}}{\pl x^i}  = b_i^{(1)}, \qquad \tilde{\na}_i b^{(0)i}=0 \\
a^{(2)} = \frac{1}{2} \kappa, \qquad b_i^{(2)} - (g^{(2)})_i^j b^{(2)}_j + \frac{\pl a^{(1)}}{\pl x^i}=0,
\end{align*}
and
\begin{align}
\tilde{\na}_i b^{(1)i} + a^{(0)} \mxtr_{\tilde{g}} g^{(2)} = (n-1) \kappa.
\end{align}
Here we raise and lower indices with respect to $\tilde{g}$.
Setting $b^{(0)i}= b^{(2)i}=0$ solves all but one equation. The last one
\begin{align}\label{Poisson}
- \tilde{\Delta} a^{(0)} - \mxRic \lt( \frac{\pl}{\pl r}, \frac{\pl}{\pl r} \rt) a^{(0)} = (n-1) \kappa
\end{align}
is solvable on $N$ by assumption (\ref{assumption, Ric}) and the maximum principle. Inequality (\ref{min a}) follows from the maximum principle.
\end{proof}

\section{Brendle's geometric inequality on static manifolds}
We closely follow the presentation of \cite[Section 3]{B} and omit the arguments that are identical. The new ingredients are Lemma \ref{improved normal vector estimate} and Lemma \ref{reciprocal inequality}. Lemma \ref{improved normal vector estimate} is an improvement of Lemma 3.6 in \cite{B}. While Proposition 2.3 in \cite{B} is not available in our case because we only have an approximate conformal Killing vector field $X$ near horizon, we replace it by Lemma \ref{reciprocal inequality}.

We recall some definitions from \cite{B}. Let $\Sigma$ be a closed, embedded, orientable, mean-convex hypersurface that is homologous to $N$. Let $\Omega$ be the domain enclosed by $\Sigma$ and $N$. Consider the Fermat metric $\hat{g} = \frac{1}{f^2} g$. For each point $p \in \bar{\Omega}$, we denote by $u(p) = d_{\hat{g}} (p,\Sigma)$ the distance of $p$ from $\Sigma$ with respect to the metric $\hat{g}$. Let $\Phi: \Sigma \times[0,\infty) \rw \bar{\Omega}$ be the normal exponential map with respect to $\hat{g}$. Namely, for each point $x \in \Sigma$, the curve $t \mapsto \Phi(x,t)$ is a geodesic with respect to $\hat{g}$ with
\[ \Phi(x,0)=x, \quad \frac{\pl}{\pl t} \Phi(x,t) \Big|_{t=0} = -f(x) \nu(x). \]

Define
\[ A = \{ (x,t) \in \Sigma \times [0,\infty): u (\Phi(x,t)) = t \}\]
and \[ A^* = \{ (x,t) \in \Sigma \times [0,\infty): (x, t+\delta) \in A \mbox{ for some } \delta>0 \}. \]

\begin{lem}\label{improved normal vector estimate}
For any $0 < \sigma <1$, there exist a number $\tau_1>0$ with the following property: if $p$ is a point in $\{ u \ge \tau_1 \}$ and $\alpha$ is a unit-speed geodesic with respect to $\hat{g}$ such that $\alpha(0)=p$ and $\alpha( u(p)) \in \Sigma,$ then $| \alpha'(0)| = f(p)$ and
\[ \lt\langle \frac{\pl}{\pl r}, \alpha'(0) \rt\rangle  \ge f(p) - c f(p)^{3- \sigma}, \]
where $c$ is a constant independent of $u(p)$.
\end{lem}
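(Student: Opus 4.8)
The plan is to follow the proof of \cite[Lemma 3.6]{B}, replacing the single comparison estimate there by a one-order-finer analysis of an ODE system satisfied along $\alpha$ in the near-horizon region. The first assertion is immediate: since $\alpha$ is unit-speed with respect to $\hat g=f^{-2}g$, one has $\hat g(\alpha'(0),\alpha'(0))=1$, hence $|\alpha'(0)|_g=f(p)$.

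For the second assertion, write $\phi(s)=f(\alpha(s))$ and $\psi(s)=\langle\frac{\partial}{\partial r},\alpha'(s)\rangle=(r\circ\alpha)'(s)$, so that $|\psi|\le|\alpha'|_g=\phi$ by the Cauchy--Schwarz inequality, and set $w=\psi/\phi\in[-1,1]$. Using the formula for the Levi-Civita connection under the conformal rescaling $\hat g=e^{2(-\log f)}g$, the geodesic equation $\hat\nabla_{\alpha'}\alpha'=0$ becomes $\nabla_{\alpha'}\alpha'=2\frac{\phi'}{\phi}\alpha'-\phi\,\nabla f$; pairing with $\frac{\partial}{\partial r}=\nabla r$ and using $\langle\nabla_{\alpha'}\nabla r,\alpha'\rangle=\mathrm{Hess}\,r(\alpha',\alpha')$ and $\langle\nabla r,\nabla f\rangle=\frac{\partial f}{\partial r}$ gives
\[
\psi'=\mathrm{Hess}\,r(\alpha',\alpha')+2\tfrac{\phi'}{\phi}\,\psi-\phi\,\tfrac{\partial f}{\partial r}.
\]
Now I would feed in the near-horizon expansions (\ref{exp, f}) and (\ref{exp, metric}): these give $\frac{\partial f}{\partial r}=\kappa+O(r^2)$, $\phi'=\langle\nabla f,\alpha'\rangle=\kappa\psi+O(r^2\psi)+O(r^3\phi)$, and, since $N$ is totally geodesic and $\mathrm{Hess}\,r$ annihilates $\frac{\partial}{\partial r}$, $\mathrm{Hess}\,r(\alpha',\alpha')=O(r)(\phi^2-\psi^2)$. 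Since $f\ge\frac12\kappa r$ on the tubular neighborhood, $r\asymp\phi$ there, and a short computation turns the equation for $\psi$ into the system
\[
w'=-\kappa(1-w^2)+O(\phi^2),\qquad \phi'=\kappa w\,\phi+O(\phi^3),
\]
valid while $\alpha$ remains in $N\times[0,r_0)$ with $\phi$ small.

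The weak estimate comes as in \cite{B}: $r\circ\alpha$ is increasing in the near-horizon region --- the system forces any interior critical point of $r\circ\alpha$ to be a strict maximum, while $\alpha$ must exit this region to reach $\Sigma$ --- so $w>0$ there; feeding this back into the system (at a point with $w\le\frac{1}{\sqrt2}$ one gets $\psi'<0$, so $\psi$ would reach $0$ within a bounded $\hat g$-length, impossible once $p$ is deep enough that the near-horizon region has large $\hat g$-width) yields a uniform lower bound $w\ge w_0>0$. For the improvement, put $\eta=1-w^2\in[0,1]$, so $\eta'=2\kappa w\,\eta+O(\phi^2)$, and consider $\Gamma=\eta/\phi^{2-\sigma}$, for which
\[
(\log\Gamma)'=\sigma\kappa w+\frac{O(\phi^2)}{\eta}+O(\phi^2).
\]
Whenever $\Gamma\ge K$ we have $\eta\ge K\phi^{2-\sigma}$, so $O(\phi^2)/\eta=O(\phi^{\sigma}/K)$; together with $w\ge w_0$ this makes $(\log\Gamma)'>0$ once $K$ is a large enough fixed constant and $\phi$ is small, so $\Gamma$ can exceed $K$ at an interior point of the near-horizon region only by increasing thereafter. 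Since $\Gamma$ is bounded by a fixed constant on $\{f\ge\delta_0\}$ (with $\delta_0=\min_\Sigma f>0$), which $\alpha$ enters before reaching $\Sigma$, a continuity/maximum-principle argument bounds $\Gamma$ on all of $[0,u(p)]$ by a constant $c$ independent of $u(p)$. Hence $\eta(0)\le c\,\phi(0)^{2-\sigma}$, and using $w(0)\ge w_0>0$,
\[
\Big\langle\tfrac{\partial}{\partial r},\alpha'(0)\Big\rangle=\phi(0)\sqrt{1-\eta(0)}\ge\phi(0)\bigl(1-\eta(0)\bigr)\ge f(p)-c\,f(p)^{3-\sigma},
\]
with $\tau_1$ chosen so that $u(p)\ge\tau_1$ forces $f(p)$ small enough for all of the above.

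The crux is the last step. The forcing term $O(\phi^2)$ in the equation for $\eta=1-w^2$ is of the \emph{same} order as $\phi^2$ itself, hence resonant with the homogeneous rate $2\kappa w$; it cannot be integrated to an $O(\phi^2)$ (i.e. $O(f^3)$) bound, and this is precisely why the exponent is $3-\sigma$ with a constant blowing up as $\sigma\to 0$. Extracting the $\phi^{2-\sigma}$ bound therefore requires the differential inequality for the ratio $\Gamma$, rather than a direct estimate, and the main technical care goes into checking that the various $O(\cdot)$ errors --- coming both from the metric not being an exact warped product and from $f$ depending on the $N$-directions only at order $r^3$ --- are controlled uniformly along $\alpha$ as $\phi\to0$; this uniformity is the content that \cite[Lemma 3.6]{B} lacks and that the present computation supplies.
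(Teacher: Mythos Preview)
Your argument is correct but proceeds quite differently from the paper's. The paper stays close to Brendle's original template: it uses the pointwise Hessian inequality $-fD^2f+|Df|^2g\ge\kappa^2(1-f)g$ near the horizon, which translates into $\hat D^2(1/f)\ge\kappa^2(1/f-1)\hat g$ and hence a second-order linear ODE inequality for $1/f(\alpha(t))$; integrating this explicitly with $\cosh/\sinh$ and evaluating at the exit time $t_1$ gives a lower bound on $\langle Df(p),\alpha'(0)\rangle$ containing terms $e^{-\kappa t_1}$ and $e^{-2\kappa t_1}$. The $\sigma$ enters only at the very end, through the crude growth bound $\phi'\le\frac{\kappa}{1-\sigma}\phi$ (valid on a small enough collar), which converts $e^{-\kappa t_1}$ into $(f(p)/f(\alpha(t_1)))^{1-\sigma}$ and yields the $f(p)^{3-\sigma}$ error.

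You instead recast everything as the first-order system $w'=-\kappa(1-w^2)+O(\phi^2)$, $\phi'=\kappa w\phi+O(\phi^3)$, and run a barrier argument on the scale-invariant ratio $\Gamma=(1-w^2)/\phi^{2-\sigma}$. This is a genuinely different mechanism: rather than solving an ODE explicitly, you exploit the monotonicity $(\log\Gamma)'>0$ in the regime $\Gamma\ge K$ to propagate the boundary value $\Gamma(t_1)\le C$ backwards to $s=0$. The trade-off is that your route needs the weak estimate $w\ge w_0>0$ along the near-horizon portion of $\alpha$ as a separate input (which you correctly obtain by applying \cite[Lemma~3.6]{B} at each $\alpha(s)$, noting that $\{r\le r_1\}\subset\{u\ge\tau_0\}$ for $r_1$ small), whereas the paper's Hessian-comparison route is self-contained and in fact renders that lemma unnecessary. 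On the other hand, your approach makes the role of the exponent $3-\sigma$ more transparent as a near-resonance phenomenon, and would adapt more readily to settings where an exact Hessian inequality of the form used in the paper is not available.
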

\begin{proof}
We follow Brendle's idea. Since $D^2 f = O(r)$ and $Df = \kappa \frac{\pl}{\pl r} + O(r^2)$, we can find a small number $r_1$ such that 
\begin{align}\label{Hessian inequality}
-f D^2 f + |Df|^2 g \ge \kappa^2 (1-f) g
\end{align}
and 
\begin{align}\label{simple inequality}
\frac{d}{dt} f(\alpha(t)) = \langle Df(\alpha(t)), \alpha'(t) \rangle \le |Df (\alpha(t))|  \cdot |\alpha'(t)| \le \frac{\kappa}{1-\sigma} f(\alpha(t))
\end{align}
on the set $N \times (0,r_1]$. By (\ref{Hessian inequality}), the Hessian of the function $\frac{1}{f}$ with respect to $\bar{g}$ satisfies
\[ \hat{D}^2 \lt( \frac{1}{f} \rt) \ge \kappa^2 \lt( \frac{1}{f} - 1 \rt) \hat{g} \]
on the set $N \times (0,r_1].$ Let $T$ be a large number such that
\[ \dfrac{e^{\kappa t} + e^{-\kappa t} + 2}{e^{\kappa t} - e^{-\kappa t} + 2} \ge 1 + 2 e^{-2\kappa t} \mbox{ and } - \frac{4}{e^{\kappa t} - 3 e^{-\kappa t} + 2} \ge -5 e^{-\kappa t} \]
for $t \ge T$. Let \[ \tau_1 = \max_{N \times \{ r_1\}} u + T . \] Consider a point $p \in \{ u \ge \tau_1\}$ and a unit-speed geodesic $\alpha$ with respect to $\hat{g}$ such that $\alpha(0)=p$ and $\alpha(u(p)) \in \Sigma$.  We now define $t_1 = \inf \{ t \in [0,u(p)] : \alpha(t) \notin N \times (0,r_1].$ Clearly $t_1 \ge T$.  
Moreover, we have
\[ \frac{d^2}{dt^2} \lt( \frac{1}{f(\alpha(t))} \rt) \ge \kappa^2 \lt( \frac{1}{f(\alpha(t))} - 1 \rt)\]
for all $t \in [0,t_1]$. Integrating this differential inequality, we obtain
\[ \frac{1}{f(\alpha(t))} \ge \frac{1}{2} \lt( \frac{1}{f(p)} + \frac{\langle Df(p), \alpha'(0) \rangle}{\kappa f(p)^2} \rt) \lt( \cosh (\kappa t) +1 \rt) - \frac{1}{\kappa f(p)^2} \langle Df(p), \alpha'(0) \rangle \lt( \sinh (\kappa t) + 1 \rt)\]
Putting $t=t_1$ and rearranging terms gives
\[ \langle Df(p), \alpha'(0) \rangle \ge \kappa \lt(  \frac{e^{\kappa t_1} + e^{-\kappa t_1} + 2}{e^{\kappa t_1} - 3 e^{-\kappa t_1} + 2} f(p) - \frac{4}{e^{\kappa t_1} - 3e^{-\kappa t_1} + 2} \frac{f(p)^2}{f(\alpha(t_1))} \rt).\]
By our choice of $t_1$, we obtain
\begin{align}\label{intermediate}
 \langle Df(p), \alpha'(0) \rangle \ge \kappa \lt( \lt( 1 + 2 e^{-2\kappa t_1} \rt) f(p) - 5 e^{-\kappa t_1} \frac{f(p)^2}{f(\alpha(t_1))} \rt).
\end{align}

Integrating (\ref{simple inequality}), we get \[ e^{\frac{\kappa}{1-\sigma}  t_1} \ge \dfrac{f(\alpha)(t_1)}{f(p)} \]
and
\[ -e^{-\kappa t_1} \ge \frac{f(p)^{1-\sigma}}{f(\alpha(t_1))^{1-\sigma}}.\]
Inserting this inequality into (\ref{intermediate}), we get
\begin{align*}
\langle Df(p), \alpha'(0) \rangle &\ge \kappa f(p) - \frac{5 \kappa}{f(\alpha(t_1))^{1-\sigma}} f(p)^{3-\sigma}.
\end{align*}
Recalling that $Df = \kappa \frac{\pl}{\pl r} + O(r^2)$, we reach
\[ \lt\langle \frac{\pl}{\pl r}, \alpha'(0) \rt\rangle \ge f(p) - c f(p)^{3-\sigma},\]
as claimed.
\end{proof}

We fix $\sigma$ and $\tau_1$ so that the conclusion of  Lemma \ref{improved normal vector estimate} holds and $f(p) - c f(p)^{3-\sigma} \ge \frac{1}{2} f(p)$. The next two lemmata thus follow from Lemma 3.7 and 3.8 in \cite{B} verbatim.
\begin{lem}
Suppose that $\gamma: [a,b] \rw \{ u \ge \tau_1\}$ is a smooth path satisfying $|\gamma'(s) + f(\gamma(s)) \frac{\pl}{\pl r}|_{\hat{g}} \le \frac{1}{4}$ for all $s \in [a,b].$ Then
\[ u \lt( \gamma(g)\rt) - u \lt( \gamma(a)\rt) \ge \frac{1}{4} (b-a). \]
\end{lem}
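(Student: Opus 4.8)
The plan is to follow the proof of \cite[Lemma 3.7]{B} essentially line by line, using Lemma \ref{improved normal vector estimate} wherever Brendle invokes his Lemma 3.6. The starting point is that $u = d_{\hat g}(\cdot,\Sigma)$ is $1$-Lipschitz with respect to $\hat g$ and, being the distance function to a smooth embedded hypersurface, is locally semiconcave on $\ol\Omega\setminus\Sigma$. Hence $u\circ\gamma$ is Lipschitz on $[a,b]$, so it is absolutely continuous and
\[ u(\gamma(b)) - u(\gamma(a)) = \int_a^b (u\circ\gamma)'(s)\,ds. \]
I would therefore reduce the claim to the pointwise estimate $(u\circ\gamma)'(s)\ge\tfrac14$ for almost every $s\in[a,b]$, and then integrate.

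To obtain this pointwise bound, fix $s$ at which $u\circ\gamma$ is differentiable and let $\alpha$ be a unit-speed (with respect to $\hat g$) minimizing geodesic from $\gamma(s)$ to $\Sigma$, so $\alpha(0)=\gamma(s)$ and $|\alpha'(0)|_{\hat g}=1$. Since $\gamma(s)\in\{u\ge\tau_1\}$, Lemma \ref{improved normal vector estimate} applies to $\alpha$ and gives
\[ \lt\langle \frac{\pl}{\pl r}, \alpha'(0)\rt\rangle_g \ge f(\gamma(s)) - c\,f(\gamma(s))^{3-\sigma} \ge \tfrac12 f(\gamma(s)), \]
where the last inequality uses the normalization of $\sigma$ and $\tau_1$ fixed just above. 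I would then appeal to the first-variation formula for distance functions in the form $(u\circ\gamma)'(s) = -\langle \alpha'(0),\gamma'(s)\rangle_{\hat g}$, valid at any point where $u\circ\gamma$ is differentiable, whether or not $u$ itself is differentiable at $\gamma(s)$; this is exactly where the semiconcavity of $u$ enters, since $-\alpha'(0)$ always lies in its superdifferential.

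The remaining step is a one-line computation. Writing $\xi(s) = \gamma'(s) + f(\gamma(s))\frac{\pl}{\pl r}$, so that $|\xi(s)|_{\hat g}\le\tfrac14$ by hypothesis, and using $\hat g = f^{-2}g$ together with the Cauchy--Schwarz inequality,
\begin{align*}
(u\circ\gamma)'(s) &= \lt\langle \alpha'(0),\, f(\gamma(s))\tfrac{\pl}{\pl r}\rt\rangle_{\hat g} - \lt\langle \alpha'(0), \xi(s)\rt\rangle_{\hat g}\\
&\ge \frac{1}{f(\gamma(s))}\lt\langle \alpha'(0), \tfrac{\pl}{\pl r}\rt\rangle_g - |\alpha'(0)|_{\hat g}\,|\xi(s)|_{\hat g} \ge \tfrac12 - \tfrac14 = \tfrac14,
\end{align*}
and integrating over $[a,b]$ then yields $u(\gamma(b)) - u(\gamma(a))\ge\tfrac14(b-a)$.

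Since the estimate itself is immediate from Lemma \ref{improved normal vector estimate}, the only point that takes any care — and really the only obstacle — is the regularity bookkeeping: that $u$ is semiconcave near $\{u\ge\tau_1\}$, that $u\circ\gamma$ is therefore absolutely continuous, and that the first-variation identity for $(u\circ\gamma)'$ holds almost everywhere. These are standard facts about the distance function to a smooth hypersurface in the region away from that hypersurface, and they are precisely what underlies the corresponding step in \cite{B}, so I would cite them rather than reprove them.
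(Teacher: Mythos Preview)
Your proposal is correct and follows exactly the approach the paper takes: the paper simply states that this lemma ``follows from Lemma 3.7 \dots\ in \cite{B} verbatim'' once the normalization $f(p)-cf(p)^{3-\sigma}\ge\tfrac12 f(p)$ is in force, and your write-up is precisely that verbatim argument, with Lemma~\ref{improved normal vector estimate} substituted for Brendle's Lemma~3.6. Your handling of the semiconcavity/first-variation point is accurate and is the same mechanism underlying \cite{B}.
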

\begin{lem}\label{approximation of distance function}
Given any real number $\tau \ge \tau_1 + 2$, there exists a sequence of smooth functions $u_j: \{ \tau-1 < u < \tau+1 \} \rw \R$ with the following properties:
\end{lem}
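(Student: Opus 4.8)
As noted in the text, this lemma is Brendle's Lemma 3.8 \cite{B} ``verbatim,'' so my plan is to reconstruct that argument, the one substitution being that near the horizon $N$ the Fermat metric $\hat g=f^{-2}g$ is merely \emph{asymptotic} to a constant-curvature cusp rather than equal to one --- and that discrepancy has already been absorbed into Lemma \ref{improved normal vector estimate}, which plays the role of Brendle's Lemma 3.6. The functions $u_j$ will be ordinary mollifications of the $\hat g$-distance function $u=d_{\hat g}(\cdot,\Sigma)$ restricted to the slab $S:=\{\tau-1<u<\tau+1\}$, and the properties one wants are the standard package: (i) $\sup_{S}|u_j-u|\to 0$; (ii) $|\hat\nabla u_j|_{\hat g}\le 1+o(1)$ pointwise, together with $|\hat\nabla u_j|_{\hat g}^2\to 1$ in $L^1(S)$; and (iii) a one-sided bound $\hat\Delta u_j\le C+o(1)$ on $S$ --- equivalently, the measures $\hat\Delta u_j\,d\mathrm{vol}_{\hat g}$ converge weakly to a signed measure bounded above by a multiple of $d\mathrm{vol}_{\hat g}$ (sharpened, if desired, to a $\coth$-type Riccati comparison bound inherited from the cusp geometry).

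The first and crucial observation is that $S$ is a well-controlled region. Since $\tau\ge\tau_1+2$, every point of $S$ lies at $\hat g$-distance at least $\tau_1+1>\tau_1$ from $\Sigma$, so Lemma \ref{improved normal vector estimate} governs every minimizing $\hat g$-geodesic issuing from such a point. On the other hand $u$ is proper on $\bar\Omega$: because $f\to 0$ at $N$, any curve reaching $N$ has infinite $\hat g$-length, so $u=+\infty$ on $N$ and $\{\tau-1\le u\le\tau+1\}$ is a compact subset of $\bar\Omega\setminus N$ on which $f$ is bounded below, hence $\hat g=f^{-2}g$ is smooth with bounded geometry (bounded curvature, injectivity radius bounded below), and $\Sigma$ itself is at definite $\hat g$-distance from it. On this compact slab $u$ is $1$-Lipschitz with $|\hat\nabla u|_{\hat g}=1$ almost everywhere, and --- as a distance function staying a definite distance from $\Sigma$ in a region of bounded geometry --- it satisfies a Laplacian comparison bound $\hat\Delta u\le C$ (indeed a semiconcavity estimate $\hat D^2 u\le C\,\hat g$) in the barrier/distributional sense, with $C$ depending only on $\tau$ and the geometry of $\hat g$ near $S$.

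Next I would smooth. Cover the compact slab by finitely many $\hat g$-geodesically convex coordinate charts with a subordinate partition of unity, convolve $u$ chart by chart with the rescaled standard mollifier $\eta_j(x)=j^{n}\eta(jx)$, and reassemble; for $j$ large this yields $u_j\in C^\infty(S)$. Property (i) is then immediate from the $1$-Lipschitz bound, which gives $|u_j-u|=O(1/j)$; property (ii) follows from the pointwise gradient estimate for mollifications of Lipschitz functions together with $|\hat\nabla u|_{\hat g}=1$ a.e.\ and the $L^1$-continuity of mollification; and property (iii) follows because a distributional upper bound on $\hat D^2 u$ (or on $\hat\Delta u$) is inherited, up to an $o(1)$ error, by the mollifications $u_j$.

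The part I expect to be delicate is not the existence of the smoothing but keeping properties (ii) and (iii) quantitatively sharp and mutually compatible, because downstream --- in the Heintze--Karcher-type computation of Section 3, where one multiplies $\hat\Delta u_j$ by $f$ times a positive weight built from the approximate conformal Killing field of Proposition \ref{approx conf Killing}, integrates over $S$, and lets $j\to\infty$ --- one must recover exactly the horizon term of (\ref{main}) with no loss. This forces care in two places: the mollification is best performed in coordinates adapted to the level sets of $u$ (or else the partition-of-unity cross terms must be shown to be $o(1)$ in the relevant norms), and the comparison constant $C$ near $N$ must be tracked uniformly over $S$. Both of these are precisely what the first step --- identifying $S$ as a compact region governed by Lemma \ref{improved normal vector estimate} --- is there to supply.
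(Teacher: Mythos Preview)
The paper itself gives no proof --- it simply cites Brendle \cite[Lemma~3.8]{B} verbatim --- so the relevant comparison is between your sketch and Brendle's actual argument.

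Your approach diverges from Brendle's on one technical point that matters.  Brendle does \emph{not} use coordinate-chart mollification glued by a partition of unity; he uses the Greene--Wu Riemannian convolution (averaging $u$ over $\hat g$-geodesic balls of small radius).  The reason is property~(v), the pointwise bound $\hat D^2 u_j \le K(\tau)\,\hat g$.  Greene--Wu smoothing is engineered precisely so that an upper barrier bound on the Hessian of $u$ passes to a genuine pointwise bound on the Hessian of $u_j$.  Your chart-and-partition scheme produces, for $u_j=\sum_\alpha \phi_\alpha\, u_j^{(\alpha)}$, the cross terms
\[
\sum_\alpha \bigl(D^2\phi_\alpha\bigr)\, u_j^{(\alpha)}
\;+\; 2\sum_\alpha D\phi_\alpha \otimes D u_j^{(\alpha)},
\]
which carry no sign.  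You flag this (``the mollification is best performed in coordinates adapted to the level sets of $u$''), but that fix is unavailable: the whole point is that $u$ is not smooth on the cut locus, so there are no such coordinates exactly where the smoothing is needed.  With Greene--Wu the issue disappears; with your method property~(v) is not established.

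Separately, the list of properties you aim for is not the list in the statement.  You omit property~(i), the $C^\infty_{\mathrm{loc}}$ convergence $u_j\to u$ on the set $W=\Phi\bigl(A^*\cap(\Sigma\times(\tau-1,\tau+1))\bigr)$ off the cut locus, which is what lets one pass to the limit later.  You also omit property~(iv), the lower bound $u_j(\gamma(b))-u_j(\gamma(a))\ge \tfrac14(b-a)$ along integral curves of $-f\,\partial_r$; this is inherited from the immediately preceding lemma (the analogue of \cite[Lemma~3.7]{B}) once the smoothing is done at a small enough scale, but it needs to be said.  Your items (i)--(iii) are a reasonable paraphrase of (ii), (iii), and a weakened Laplacian version of (v), but they are not what is being asserted.
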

\begin{enumerate}
\item[(i)] The functions $u_j$ converge smoothly to $u$ away from the cut locus. More precisely, $u_j \rw u$ in $C^\infty_{loc}(W)$, where $W = \Phi(A^* \cap (\Sigma \times (\tau-1,\tau+1))).$
\item[(ii)] For each point $p \in \{ \tau-1 < u < \tau+1\},$ we have $|u_j(p) - u(p)| \le \frac{1}{j^2}$.
\item[(iii)] For all points $p,q \in \{ \tau-1 < u < \tau+1\},$ we have $|u_j(p) - u_j(q)| \le (1 + \frac{1}{j}) d_{\hat{g}}(p,q)$.
\item[(iv)] If $\gamma:[a,b] \rw \{ \tau-1 < u < \tau+1 \}$ is an integral curve of the vector field $-f \frac{\pl}{\pl r},$ then $u_j(\gamma(b)) - u_j(\gamma(a)) \ge \frac{1}{4} (b-a)$.
\item[(v)] We have $\hat{D}^2 u_j \le K(\tau) \hat{g}$ at each point $p \in \{ \tau-1 < u < \tau+1\}$. Here, $K(\tau)$ is a positive constant which may depend on $\tau$, but not on $j$. 
\end{enumerate}

\begin{lem}\label{reciprocal inequality}(cf. \cite[Proposition 2.3]{B}) Given $\tau \ge \tau_1+2$, let $u_j$ be a sequence of smooth functions satisfying properties (i)-(v) in Lemma \ref{approximation of distance function}. Denote the level set $\{ u_j =t_j\}$ by $S_j$, where $t_j \in [\tau+\frac{1}{j^2}, \tau+\frac{1}{j} - \frac{1}{j^2}]$. We have
\begin{align*}
\limsup_{j \rw \infty} \int_{S_j} \frac{H}{f} \langle X,\nu \rangle d\mu \le (n-1) \limsup_{j \rw \infty} \mu (S_j) + O(r^\sigma).
\end{align*}
\end{lem}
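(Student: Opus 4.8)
The plan is to reduce the desired inequality to a pointwise divergence identity for the approximate conformal Killing field $X$, then integrate it over the region of $\Omega$ foliated by the level sets $S_j$ and push $j\to\infty$. First I would fix a large $\tau$, take the smooth functions $u_j$ from Lemma \ref{approximation of distance function}, and work on the level set $S_j = \{u_j = t_j\}$, which by property (ii) is close to $\{u = \tau\}$ and hence contained in the tubular neighborhood $N\times[0,r_0)$ once $\tau$ is large (this is why one needs $\tau \ge \tau_1+2$ and the smallness of $r_0$). On $S_j$ the unit normal $\nu_j$ with respect to $\hat g$ is $\hat D u_j / |\hat D u_j|_{\hat g}$, and Lemma \ref{improved normal vector estimate} together with the convergence $u_j \to u$ tells us that $\nu_j$ is, up to an error of order $f^{2-\sigma} = O(r^{2-\sigma})$, equal to $-\frac{\pl}{\pl r}$ (rescaled appropriately between the metrics $g$ and $\hat g$). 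This is the key geometric input: near the horizon the $\hat g$-geodesics hitting $\Sigma$ become nearly radial.

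Next I would compute $\int_{S_j} \frac{H}{f}\langle X,\nu\rangle\,d\mu$, where $H$ and $\mu$ and $\nu$ refer to the background metric $g$ (matching Brendle's convention). Using the first Gauss–Codazzi-type manipulation exactly as in \cite[Proposition 2.3]{B}, one rewrites $H\langle X,\nu\rangle$ on a hypersurface in terms of the full divergence $\mathrm{div}_g X$ minus the tangential part $\sum_i \langle D_{E_i} X, E_i\rangle$ plus a term involving $\langle D_\nu X,\nu\rangle$ and the second fundamental form acting on the tangential components of $X$; integrating the divergence-type term over a thin shell between two nearby level sets and letting the shell shrink converts it into a boundary integral. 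Here the three conformal-Killing relations in \eqref{conf Killing eq} are used: $\langle D_{\pl/\pl r} X, \pl/\pl r\rangle = f + O(r^2)$ and $\sum_i \langle D_{E_i}X, E_i\rangle = (n-1)f + O(r^2)$ replace the exact identities Brendle has, and the mixed relation controls the cross terms. Because $\nu_j = -\frac{\pl}{\pl r} + O(r^{2-\sigma})$, the error from substituting $\frac{\pl}{\pl r}$ for $\nu_j$ in each of these three relations is $O(r^{2-\sigma})$, and dividing by $f \sim \kappa r$ leaves an integrand error of $O(r^{1-\sigma})$, which integrates to $O(r^{\sigma})$ after accounting for... actually one should be careful: the clean bookkeeping is that $\frac{H}{f}\langle X,\nu\rangle \le (n-1) + O(r^\sigma)$ pointwise on $S_j$ up to terms that integrate to zero in the limit, using property (v) of $u_j$ to control $H$ of the level sets (so that $H$ is bounded, with bound independent of $j$) and property (iv) to relate the foliation speed to $f\frac{\pl}{\pl r}$.

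Putting this together: $\int_{S_j}\frac{H}{f}\langle X,\nu\rangle\,d\mu \le (n-1)\mu(S_j) + (\text{terms} \to 0) + O(r^\sigma)$, and taking $\limsup_{j\to\infty}$ gives the claim. The main obstacle I expect is the error analysis in the second paragraph: one must verify that the discrepancy between the approximate conformal Killing equations \eqref{conf Killing eq} (valid only modulo $O(r^2)$) and Brendle's exact ones, once divided by $f = O(r)$ and integrated against the measure of $S_j$, genuinely contributes only $O(r^\sigma)$ rather than $O(1)$; this forces the use of the \emph{improved} normal estimate $\langle \frac{\pl}{\pl r},\alpha'(0)\rangle \ge f(p) - cf(p)^{3-\sigma}$ from Lemma \ref{improved normal vector estimate} (Brendle's original Lemma 3.6 would only give $O(r)$, not $O(r^{2-\sigma})$, for the normal discrepancy, which is exactly why that lemma had to be sharpened). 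A secondary technical point is justifying the passage from the smooth approximations $u_j$ — whose level sets are genuine smooth hypersurfaces where integration by parts is valid — to the limit, using properties (i)–(v); this is where one appeals to the fact that $u_j$ converge in $C^\infty_{loc}$ away from the cut locus while having uniformly bounded Hessian globally, so that the contribution near the cut locus is controlled in measure.
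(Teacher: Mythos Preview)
Your overall framework is correct and matches the paper: one uses the tangential divergence identity $\mathrm{div}_{S_j}(X^T)=\sum_i\langle D_{e_i}X,e_i\rangle-H\langle X,\nu\rangle$, plugs in the approximate conformal Killing relations \eqref{conf Killing eq} (after transferring from the frame $\{E_i,\partial_r\}$ to $\{e_i,\nu\}$ via the improved normal estimate), divides by $f$, and integrates over the closed hypersurface $S_j$. The ``thin shell'' step you mention is a red herring: no shell integration occurs, just integration by parts on $S_j$ itself.

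The genuine gap is what happens after that integration by parts. Dividing by $f$ and integrating, the divergence term does \emph{not} vanish; it becomes
\[
\int_{S_j}\frac{1}{f}\,\mathrm{div}_{S_j}(X^T)\,d\mu=\int_{S_j}\frac{\langle X^T,Df\rangle}{f^2}\,d\mu,
\]
and this integral cannot be bounded in absolute value by $O(r^\sigma)$: a direct estimate gives $|\langle X^T,Df\rangle|=O(r^{2-\sigma})$ at best, so the integrand is $O(r^{-\sigma})$, which does not decay. The paper's key extra step, which your proposal does not identify, is the \emph{sign} observation: from the expansion in Proposition~\ref{approx conf Killing}, $X-r\,b^i\partial_{x^i}$ is a positive multiple of $Df$ up to $O(r^3)$, hence
\[
\langle X^T,Df\rangle\;\ge\;r\,b^i\big\langle(\partial_{x^i})^T,Df\big\rangle+O(r^3).
\]
Only this residual tangential piece $r\,b^i\partial_{x^i}$ needs to be estimated, and the extra factor of $r$ is exactly what makes the improved normal estimate (Lemma~\ref{improved normal vector estimate}) sufficient to control $\langle(\partial_{x^i})^T,Df\rangle$ and close the bound. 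Your error bookkeeping (``divide $O(r^{2-\sigma})$ by $f$, get $O(r^{1-\sigma})$, integrate to $O(r^\sigma)$'') is applied to the wrong term and would not close without this decomposition of $X$.
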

\begin{proof}
By Lemma \ref{improved normal vector estimate}, the tangent space of $\Sigma_\tau^*$ at $p$ and $N \times \{ r(p)\}$ differ by $O(r^{1+\sigma})$. We may assume the same holds for $j$ sufficiently large. Given a point $p \in S_j \cap (N \times \{ r\})$ and orthonormal frames $\{ e_i\}_{i=1}^{n-1}$ of $T_p S_j$, we can find orthonormal frame $\{ E_i\}_{i=1}^{n-1}$ of $T_p (N \times r(p))$ such that
\[ e_i = A_i E_i + B_i \frac{\pl}{\pl r}\]
for all $i$ with $A_i = 1 + O(r^{1+\sigma})$ and $B_i = O(r^{1+\sigma})$. 

Let $X^T$ be the tangential projection of $X$ on $S_j$. By Proposition \ref{approx conf Killing}, we have
\begin{align*}
\na_i (X^T)^i &= \langle D_{e_i} X, e_i \rangle - H \langle X, \nu \rangle \\
&= (n-1) f - H \langle X,\nu \rangle + O(r^2).
\end{align*}
Dividing by $f$ and integrating on $S_j$, we get
\begin{align*}
\int_{S_j} \frac{\langle X^T, Df \rangle}{f^2} d\mu = (n-1) \mu(S_j) - \int_{S_j} \frac{H}{f} \langle X,\nu\rangle d\mu + O(r).
\end{align*}
At each point, $X - r b^i \frac{\pl}{\pl x^i}$ is a positive multiple of $Df$ up to $O(r^3)$. Hence, 
\begin{align*}
\int_{S_j} \frac{r b^i \lt\langle\lt( \frac{\pl}{\pl x^i}\rt)^T , Df \rt\rangle}{f^2} d\mu \le (n-1) \mu(S_j) - \int_{S_j} \frac{H}{f} \langle X,\nu \rangle d\mu + O(r)
\end{align*}
By Lemma \ref{improved normal vector estimate}, we have $\lt\langle \lt( \frac{\pl}{\pl x^i}\rt)^T, Df \rt\rangle = O(r^{1+\sigma})$ and hence, 
\[ \int_{S_j} \frac{H}{f} \langle X,\nu \rangle d\mu \le (n-1) \mu(S_j) + O(r^\sigma). \]
From this, the assertion follows.
\end{proof}

The next result is proved identically as in \cite{B}. The only difference is that we use Lemma \ref{reciprocal inequality} instead of \cite[Proposition 2.3]{B} to get the same inequality on $S_j$.
\begin{prop}\cite[Proposition 3.9]{B}
For $\tau \ge \tau_1 + 2$ we have
\begin{align}\label{area}
\mu(\Sigma_\tau^*) \ge \mbox{vol}(N)
\end{align}
and
\begin{align}
\int_{\Sigma_\tau^*} \frac{H}{f} \langle X,\nu \rangle d\mu \le (n-1) \mu(\Sigma_\tau^*) + c(\tau)
\end{align}
for some $c$ satisfying $\lim_{\tau \rw \infty} c(\tau) =0$.
\end{prop}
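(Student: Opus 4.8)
I would follow \cite[Proposition 3.9]{B} essentially verbatim, with Lemma \ref{reciprocal inequality} playing the role of \cite[Proposition 2.3]{B}. Fix $\tau \ge \tau_1 + 2$ and let $u_j \colon \{ \tau - 1 < u < \tau + 1 \} \rw \R$ be the smooth functions supplied by Lemma \ref{approximation of distance function}, satisfying (i)--(v). By Sard's theorem I choose, for each $j$, a regular value $t_j \in [\tau + \tfrac{1}{j^2}, \tau + \tfrac{1}{j} - \tfrac{1}{j^2}]$ of $u_j$ and set $S_j = \{ u_j = t_j \}$, a closed smooth hypersurface. Since $u_j$ is $C^0$-close to $u = d_{\hat g}(\cdot, \Sigma)$ by (ii), and the level sets of $u$ separate a neighborhood of $\Sigma$ from a neighborhood of $N$, each $S_j$ is homologous to $N$; moreover, since $N$ lies at infinite $\hat g$-distance from $\Sigma$ while the complement of any neighborhood of $N$ in $\bar\Omega$ is $\hat g$-precompact, for $\tau$ large the set $\{ u = \tau \}$ — hence $\Sigma_\tau^*$ and, for $j$ large, $S_j$ — lies in the tube $N \times [0, r_1]$ on which $r$ is small and the slices $N \times \{ r \}$ have positive mean curvature with respect to $\frac{\pl}{\pl r}$.

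For the area bound I would let $\Omega_j$ be the region enclosed by $S_j$ and $N$ inside the tube and apply the divergence theorem to $-\frac{\pl}{\pl r}$ on $\Omega_j$. One has $\mbox{div}_g \frac{\pl}{\pl r} = \Delta_g r \ge 0$ for $r \le r_1$; along $N$ the outward unit normal of $\Omega_j$ is exactly $-\frac{\pl}{\pl r}$, while along $S_j$ its pairing with the outward unit normal is at least $-1$. Hence $\mbox{vol}(N) - \mu(S_j) \le -\int_{\Omega_j} \Delta_g r \, dvol \le 0$, so $\mu(S_j) \ge \mbox{vol}(N)$.

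For the second inequality, Lemma \ref{reciprocal inequality} already gives
\[ \limsup_{j \rw \infty} \int_{S_j} \frac{H}{f} \langle X, \nu \rangle \, d\mu \ \le \ (n-1)\, \limsup_{j \rw \infty} \mu(S_j) + O(r^\sigma), \]
with $r$ standing for $\max_{\Sigma_\tau^*} r$. It then remains to pass from the quantities along $S_j$ to the quantities along $\Sigma_\tau^*$. By (i) the $S_j$ converge smoothly to $\Sigma_\tau^* = \{ u = \tau \} \cap W$ away from the cut locus of $u$, so the integrands converge there; the pieces of $S_j$ near the cut locus are controlled using (iii)--(v) — the one-sided Hessian bound (v) bounds the mean curvature of $S_j$ and keeps area from escaping onto the (measure-zero) cut locus — exactly as in \cite{B}. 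Combining this with $\mu(S_j) \ge \mbox{vol}(N)$ yields (\ref{area}), and it yields $\int_{\Sigma_\tau^*} \frac{H}{f} \langle X, \nu \rangle \, d\mu \le (n-1) \mu(\Sigma_\tau^*) + c(\tau)$ with $c(\tau) = O(r^\sigma)$; since $\max_{\Sigma_\tau^*} r \rw 0$ as $\tau \rw \infty$, we get $c(\tau) \rw 0$.

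The one genuinely delicate point is this last passage from $S_j$ to $\Sigma_\tau^*$ in both limiting arguments: one must rule out loss or gain of area near the cut locus and control the limits of $\int_{S_j} \frac{H}{f} \langle X,\nu\rangle \, d\mu$ and $\mu(S_j)$ in terms of $\Sigma_\tau^*$. Since this bookkeeping is carried out word for word as in \cite{B} through properties (i)--(v) of Lemma \ref{approximation of distance function}, I would simply invoke it rather than reproduce it.
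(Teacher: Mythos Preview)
Your proposal is correct and takes essentially the same approach as the paper, which simply refers to Brendle's argument with Lemma \ref{reciprocal inequality} substituted for \cite[Proposition 2.3]{B}. The divergence-theorem argument you give for $\mu(S_j)\ge\mbox{vol}(N)$ is a clean way to execute that step in the present (non--warped-product) setting, and the passage from $S_j$ to $\Sigma_\tau^*$ is indeed the same bookkeeping as in \cite{B}.
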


\begin{cor}\cite[Corollary 3.10]{B}
Let $\lambda \in (0,1)$ be given. Then we have
\begin{align}
(n-1) \int_{\Sigma_\tau^*} \frac{f}{H} d\mu \ge \lambda \lt( \min_N a^{(0)} \rt)  \mbox{vol}(N) + c(\tau) 
\end{align}
for some $c$ satisfying $\lim_{\tau \rw \infty} c(\tau) =0$.
\end{cor}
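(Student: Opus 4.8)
The plan is to combine the two conclusions of the preceding Proposition, namely the area estimate $\mu(\Sigma_\tau^*)\ge\mbox{vol}(N)$ and the integral estimate $\int_{\Sigma_\tau^*}\frac{H}{f}\langle X,\nu\rangle\,d\mu\le(n-1)\mu(\Sigma_\tau^*)+c(\tau)$ with $c(\tau)\to0$, through a single Cauchy--Schwarz inequality. The correct pairing is to write $\sqrt{\langle X,\nu\rangle}=\sqrt{f/H}\cdot\sqrt{(H/f)\langle X,\nu\rangle}$ and apply Cauchy--Schwarz on $\Sigma_\tau^*$, giving $\big(\int_{\Sigma_\tau^*}\sqrt{\langle X,\nu\rangle}\,d\mu\big)^2\le\big(\int_{\Sigma_\tau^*}\frac{f}{H}\,d\mu\big)\big(\int_{\Sigma_\tau^*}\frac{H}{f}\langle X,\nu\rangle\,d\mu\big)$; this is precisely the pairing that yields a lower bound for $\int_{\Sigma_\tau^*}\frac fH\,d\mu$ in which the \emph{minimum} of $\langle X,\nu\rangle$ over $\Sigma_\tau^*$ (hence eventually $\min_N a^{(0)}$) appears. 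For this to be legitimate I need $\langle X,\nu\rangle>0$ on $\Sigma_\tau^*$ (together with $f,H>0$, which we already have), and more precisely $\langle X,\nu\rangle\ge\lambda\min_N a^{(0)}$ there. This I would obtain as follows: as $\tau\to\infty$ the level sets $\Sigma_\tau^*$ collapse onto $N$ (the Fermat metric $\hat g$ places $N$ at infinite distance, so $\sup_{\Sigma_\tau^*}r\to0$); by Lemma \ref{improved normal vector estimate} the unit normal of $\Sigma_\tau^*$ is $\frac{\pl}{\pl r}+o(1)$; and by the expansion of $X$ in Proposition \ref{approx conf Killing} one has $X=a^{(0)}\frac{\pl}{\pl r}+O(r)$. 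Hence $\langle X,\nu\rangle=a^{(0)}+o(1)$ uniformly on $\Sigma_\tau^*$, and since (\ref{min a}) together with $\kappa>0$ and (\ref{assumption, Ric}) gives $\min_N a^{(0)}>0$, for $\tau$ large and the given $\lambda\in(0,1)$ we indeed have $\langle X,\nu\rangle\ge\lambda\min_N a^{(0)}>0$ on $\Sigma_\tau^*$.

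With that in hand I would substitute the preceding Proposition's estimates: the second Cauchy--Schwarz factor is $\le(n-1)\mu(\Sigma_\tau^*)+c(\tau)$, which is positive for $\tau$ large, while $\int_{\Sigma_\tau^*}\sqrt{\langle X,\nu\rangle}\,d\mu\ge\sqrt{\lambda\min_N a^{(0)}}\,\mu(\Sigma_\tau^*)$, so that $(n-1)\int_{\Sigma_\tau^*}\frac fH\,d\mu\ge\frac{(n-1)\lambda(\min_N a^{(0)})\,\mu(\Sigma_\tau^*)^2}{(n-1)\mu(\Sigma_\tau^*)+c(\tau)}$. The one mildly delicate point is that $\mu(\Sigma_\tau^*)$ is a priori only bounded below, so I should not Taylor-expand the fraction; instead I would invoke the elementary inequality $\frac{V^2}{(n-1)V+c}\ge\frac{V}{n-1}-\frac{c}{(n-1)^2}$, valid whenever $V>0$ and $(n-1)V+c>0$ (clearing denominators leaves $\frac{c^2}{(n-1)^2((n-1)V+c)}\ge0$), applied with $V=\mu(\Sigma_\tau^*)$. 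This gives $(n-1)\int_{\Sigma_\tau^*}\frac fH\,d\mu\ge\lambda(\min_N a^{(0)})\mu(\Sigma_\tau^*)-\frac{\lambda(\min_N a^{(0)})}{n-1}c(\tau)$, and the area bound $\mu(\Sigma_\tau^*)\ge\mbox{vol}(N)$ then produces exactly $\lambda(\min_N a^{(0)})\mbox{vol}(N)$ plus an error which, $\min_N a^{(0)}$ being a fixed constant, tends to $0$ as $\tau\to\infty$; relabelling this error as $c(\tau)$ completes the argument (for $\tau$ large, which suffices since the corollary is only used in the limit $\tau\to\infty$; if desired one simply enlarges $\tau_1$).

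I do not expect a genuine obstacle here, since all the analytic content has already been invested in Lemma \ref{improved normal vector estimate}, Lemma \ref{reciprocal inequality} and the preceding Proposition; this corollary is essentially a bookkeeping step. The two points that require attention are: choosing the Cauchy--Schwarz pairing so that $\min_N a^{(0)}$ rather than $\max_N a^{(0)}$ emerges — the alternative factorisation $\langle X,\nu\rangle=\sqrt{(f/H)\langle X,\nu\rangle}\cdot\sqrt{(H/f)\langle X,\nu\rangle}$ would instead bound $\int\frac fH\langle X,\nu\rangle\,d\mu$ and cost a division by $\max\langle X,\nu\rangle$ — and extracting a clean $c(\tau)\to0$ error term without any upper bound on $\mu(\Sigma_\tau^*)$, which is exactly the purpose of the elementary fractional inequality above. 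The auxiliary parameter $\lambda$ is present only to absorb the $o(1)$ gap between $\langle X,\nu\rangle$ and $a^{(0)}$ along $\Sigma_\tau^*$; it is driven to $1$ afterwards, in the proof of Theorem \ref{Heintze-Karcher on static}.
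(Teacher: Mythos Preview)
Your proposal is correct and is essentially the same argument as the paper's, which simply defers to Brendle's proof of \cite[Corollary~3.10]{B} with $h(0)$ replaced by $\min_N a^{(0)}$: that proof uses precisely the Cauchy--Schwarz pairing $\sqrt{\langle X,\nu\rangle}=\sqrt{f/H}\cdot\sqrt{(H/f)\langle X,\nu\rangle}$ you describe, together with the pointwise lower bound $\langle X,\nu\rangle\ge\lambda\min_N a^{(0)}$ coming from the expansion of $X$ and Lemma~\ref{improved normal vector estimate}. Your treatment of the extra $c(\tau)$ error (absent in Brendle's setting, where $X$ is exactly conformal Killing) via the elementary inequality $\frac{V^2}{(n-1)V+c}\ge\frac{V}{n-1}-\frac{c}{(n-1)^2}$ is a clean way to finish without any upper bound on $\mu(\Sigma_\tau^*)$.
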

\begin{proof}
The proof proceeds exactly as in \cite{B} if we replace $h(0)$ by $\min_N a^{(0)}$. 
\end{proof}

\begin{theorem}
Let $\Sigma$ be an embedded hypersurface that is homologous to the boundary $N$, so that $\pl \Omega = \Sigma \cup N$. Assume that $\Sigma$ has positive mean curvature. Then
\begin{align}\label{Heintze-Karcher}
(n-1) \int_\Sigma \frac{f}{H} d\mu \ge n \int_\Omega f \,d \mbox{vol} + \dfrac{(n-1) \kappa}{\max_N \lt( \frac{R^N -\epsilon n(n-1)}{2} \rt)} \cdot \mbox{vol}(N).
\end{align}
Here $R^N$ is the scalar curvature of $N$ with the induced metric $\tilde{g}$. Moreover, if equality holds, then $\Sigma$ is umbilic.
\end{theorem}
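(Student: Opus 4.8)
The strategy is to transplant Brendle's foliation argument: we apply his Heintze--Karcher-type estimate to the region between $\Sigma$ and the approximate horizon level sets $\Sigma_\tau^*$, then let $\tau\to\infty$ and invoke the analogue of \cite[Corollary~3.10]{B} proved above to capture the horizon contribution with the sharp constant. The only structural input of Brendle's warped products that the foliation argument actually uses --- beyond the near-horizon geometry recorded in Section~2 --- is that the symmetric two-tensor $fR_{\alpha\beta}-D_\alpha D_\beta f+\Delta f\,g_{\alpha\beta}$ be nonnegative and that $f$ vanish on the inner boundary. On a static manifold this tensor is identically zero --- that is the static equation itself --- and $f=0$ on $N$, so every Riccati comparison and Jacobian estimate of \cite[Section~3]{B} carries over verbatim, with equality available in each comparison; the sole change, explained above, is that Lemma~\ref{reciprocal inequality} replaces \cite[Proposition~2.3]{B}.

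Concretely, fix $\tau\ge\tau_1+2$ and let $\Omega_\tau=\{\,p\in\bar\Omega:u(p)<\tau\,\}$ be the region swept by the initial segments of the minimizing $\hat g$-geodesics issuing from $\Sigma$. Running the argument in the proof of \cite[Theorem~3.11]{B} with $\Sigma_\tau^*$ as the inner boundary --- the smooth functions $u_j$ of Lemma~\ref{approximation of distance function} being used to make the estimate meaningful across the cut locus --- yields
\begin{align*}
(n-1)\int_\Sigma\frac{f}{H}\,d\mu\ \ge\ n\int_{\Omega_\tau}f\,d\mbox{vol}\ +\ (n-1)\int_{\Sigma_\tau^*}\frac{f}{H}\,d\mu\ -\ c(\tau),
\end{align*}
where $c(\tau)\to0$ as $\tau\to\infty$, and on $\Sigma_\tau^*$ the mean curvature $H$ is taken with respect to the unit normal pointing away from $N$; this $H$ is positive for $\tau$ large because $\Sigma_\tau^*$ is $C^1$-close to the totally geodesic horizon $N$ while, by Section~2, the level sets of $r$ have positive mean curvature with respect to $\pl/\pl r$ near $N$. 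Since the $\hat g$-geodesics normal to $\Sigma$ cover $\Omega$ up to a set of measure zero, $\Omega_\tau$ increases to $\Omega$ and $\int_{\Omega_\tau}f\,d\mbox{vol}\to\int_\Omega f\,d\mbox{vol}$ by monotone convergence; feeding \cite[Corollary~3.10]{B} in the present setting (available because the standing assumption $R^N>\epsilon n(n-1)$ makes Proposition~\ref{approx conf Killing} applicable) into the inequality above gives, for every $\lambda\in(0,1)$,
\begin{align*}
(n-1)\int_\Sigma\frac{f}{H}\,d\mu\ \ge\ n\int_\Omega f\,d\mbox{vol}\ +\ \lambda\Big(\min_N a^{(0)}\Big)\mbox{vol}(N).
\end{align*}
Letting $\lambda\to1$ and noting that on the totally geodesic horizon $N$ the Gauss equation together with $R=\epsilon n(n-1)$ gives $-\mxRic\big(\tfrac{\pl}{\pl r},\tfrac{\pl}{\pl r}\big)=\tfrac12\big(R^N-\epsilon n(n-1)\big)$, so that (\ref{min a}) reads $\min_N a^{(0)}\ge\dfrac{(n-1)\kappa}{\max_N\frac12\big(R^N-\epsilon n(n-1)\big)}$, we obtain (\ref{Heintze-Karcher}).

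For the rigidity statement, suppose equality holds in (\ref{Heintze-Karcher}). Then every inequality above is an equality, so in the limit $\tau\to\infty$ the Heintze--Karcher estimate becomes an equality; as in \cite{B}, this forces equality in the Riccati comparison along every $\hat g$-geodesic normal to $\Sigma$, and evaluating at the foot points shows that the second fundamental form of $\Sigma$ is a constant multiple of the induced metric, i.e.\ $\Sigma$ is umbilic. The one genuinely new ingredient, and the only place a substantive difficulty could hide, is the near-horizon analysis needed to recover the sharp constant in the limit: that incoming $\hat g$-geodesics become radial to order $O(r^{1+\sigma})$ (Lemma~\ref{improved normal vector estimate}) and that $\langle X,\nu\rangle\to a^{(0)}$ along $N$, with $\min_N a^{(0)}$ controlled by (\ref{min a}) (Lemma~\ref{reciprocal inequality} together with Proposition~\ref{approx conf Killing}). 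Both of these having been established, the remainder of the proof is Brendle's argument plus the elementary limiting procedure above.
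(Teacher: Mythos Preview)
Your proposal is correct and follows essentially the same route as the paper: run Brendle's foliation/Riccati argument (which goes through verbatim because the static equation makes the relevant tensor vanish), feed in the analogue of \cite[Corollary~3.10]{B} to bound $(n-1)\int_{\Sigma_\tau^*}\frac{f}{H}$ from below by $\lambda(\min_N a^{(0)})\,\mbox{vol}(N)$, let $\tau\to\infty$ and $\lambda\to1$, and finish with (\ref{min a}) plus the Gauss equation. The paper's own proof is a terse two-line reference to \cite{B} together with the footnote of Eichmair; your write-up simply unpacks that reference, with the minor caveat that the error term $c(\tau)$ properly belongs to the Corollary~3.10 step rather than the intermediate Heintze--Karcher comparison.
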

\begin{proof}
Taking footnote 1 at the bottom of \cite[page 257]{B} into account, we argue in the same way as in \cite{B} to get
\[ (n-1) \int_\Sigma \frac{f}{H} d\mu \ge n \int_\Omega f\, d\mbox{vol} + \lt( \min_N a^{(0)} \rt) \mbox{vol}(N)\]
By (\ref{min a}) and the Gauss equation, $R^N = \epsilon n(n-1) - 2\, \mxRic(\frac{\pl}{\pl r}, \frac{\pl}{\pl r})$, the result follows.
\end{proof}

With the Brendle type geometric inequality (\ref{Heintze-Karcher}) at hand, we follow \cite{BHW} to get an monotonicity formula for mean-convex hypersurfaces in static manifolds with negative cosmological constant.
\begin{cor}
Suppose $M,N, \Sigma$ satisfy the assumptions of the previous theorem. Let $\Sigma_t$ be the solution of the inverse mean curvature flow with $\Sigma_0 = \Sigma$. Then the quantity
\[ Q(t) = |\Sigma_t|^{-\frac{n-2}{n-1}}\lt( \int_{\Sigma_t} f H d\mu - n(n-1) \int_{\Omega_t} f d\mbox{vol} + \frac{n-1}{n-2} \lt( 2 - \frac{n(n-1)}{\max_N \lt( \frac{R^N + n(n-1)}{2} \rt)}\rt) \kappa \cdot vol(N) \rt)\]
is monotone decreasing under the inverse mean curvature flow. Here $|\Sigma_t|$ denotes the volume of $\Sigma_t$. 
\end{cor}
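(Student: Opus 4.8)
The plan is to run a Geroch-type monotonicity argument along the inverse mean curvature flow $\frac{\pl x}{\pl t} = \frac{1}{H}\nu$, in the spirit of Huisken--Ilmanen and following \cite{BHW} closely. Write
\[ I(t) = \int_{\Sigma_t} fH\, d\mu - n(n-1)\int_{\Omega_t} f\, d\mbox{vol} + C, \]
where $C = \frac{n-1}{n-2}\lt( 2 - \frac{n(n-1)}{\max_N\lt(\frac{R^N+n(n-1)}{2}\rt)}\rt)\kappa\cdot\mbox{vol}(N)$ is the constant appearing in $Q(t)$, so that $Q(t) = |\Sigma_t|^{-\frac{n-2}{n-1}} I(t)$. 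Since $\frac{d}{dt}|\Sigma_t| = |\Sigma_t|$ along the flow, one gets $Q'(t) = |\Sigma_t|^{-\frac{n-2}{n-1}}\lt( I'(t) - \frac{n-2}{n-1} I(t)\rt)$, so the statement reduces to the differential inequality $I'(t)\le \frac{n-2}{n-1} I(t)$.

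The heart of the matter is the evolution of $\int_{\Sigma_t} fH\, d\mu$. First I would combine the standard inverse-mean-curvature-flow identities $\frac{d}{dt}(d\mu) = d\mu$, $\frac{d}{dt}f = \frac1H \frac{\pl f}{\pl\nu}$ and $\frac{\pl H}{\pl t} = -\Delta_{\Sigma_t}(H^{-1}) - H^{-1}\lt(|A|^2 + \mxRic(\nu,\nu)\rt)$ with an integration by parts on the closed hypersurface $\Sigma_t$, using the Gauss-type identity $\Delta_{\Sigma_t} f = \Delta_g f - D^2f(\nu,\nu) - H\frac{\pl f}{\pl\nu}$. Plugging in the static equations for $\epsilon = -1$, namely $\Delta_g f = nf$ and $D^2f(\nu,\nu) = f\lt(\mxRic(\nu,\nu)+n\rt)$, the Ricci terms cancel and one is left with
\[ \frac{d}{dt}\int_{\Sigma_t} fH\, d\mu = \int_{\Sigma_t}\lt( 2\frac{\pl f}{\pl\nu} - \frac{f}{H}|A|^2 + fH\rt)d\mu. \]
Discarding the traceless second fundamental form via $|A|^2 \ge \frac{H^2}{n-1}$ bounds this by $\int_{\Sigma_t}\lt( 2\frac{\pl f}{\pl\nu} + \frac{n-2}{n-1}fH\rt)d\mu$. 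Two elementary supplements complete the setup: $\frac{d}{dt}\int_{\Omega_t} f\, d\mbox{vol} = \int_{\Sigma_t}\frac{f}{H}\, d\mu$, and the divergence theorem on $\Omega_t$ — using $\Delta_g f = nf$ inside and $\frac{\pl f}{\pl r} = \kappa$ on the totally geodesic horizon $N$ — gives $\int_{\Sigma_t}\frac{\pl f}{\pl\nu}\, d\mu = n\int_{\Omega_t} f\, d\mbox{vol} + \kappa\cdot\mbox{vol}(N)$.

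Assembling these pieces, $I'(t)$ is bounded above by
\[ \tfrac{n-2}{n-1}\int_{\Sigma_t} fH\, d\mu + 2n\int_{\Omega_t} f\, d\mbox{vol} + 2\kappa\cdot\mbox{vol}(N) - n(n-1)\int_{\Sigma_t}\tfrac{f}{H}\, d\mu . \]
Now I would feed in the Brendle-type inequality (\ref{Heintze-Karcher}), valid for $\Sigma_t$ because the flow keeps it embedded, mean-convex and homologous to $N$; it controls the last term by $-n(n-1)\int_{\Sigma_t}\frac{f}{H}d\mu \le -n^2 \int_{\Omega_t} f\, d\mbox{vol} - \frac{n(n-1)\kappa}{\max_N\lt(\frac{R^N+n(n-1)}{2}\rt)}\mbox{vol}(N)$. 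The volume integrals then combine to $-n(n-2)\int_{\Omega_t} f\, d\mbox{vol}$ and the horizon terms to $\lt(2 - \frac{n(n-1)}{\max_N((R^N+n(n-1))/2)}\rt)\kappa\cdot\mbox{vol}(N)$, and one checks these match, term by term, with $\frac{n-2}{n-1}I(t) = \frac{n-2}{n-1}\int_{\Sigma_t}fH\,d\mu - n(n-2)\int_{\Omega_t}f\,d\mbox{vol} + \frac{n-2}{n-1}C$ — indeed the constant $C$ is reverse-engineered precisely so that $\frac{n-2}{n-1}C$ equals the horizon contribution. Hence $I'(t)\le \frac{n-2}{n-1}I(t)$ and $Q'(t)\le 0$.

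The main obstacle is not this (largely mechanical) differential inequality but the fact that the smooth flow need not preserve mean-convexity and generically develops singularities. As in \cite{BHW} and the work of Huisken--Ilmanen, the rigorous argument should be carried out for the weak (level-set) solution: one verifies that the evolving surfaces remain weakly mean-convex and homologous to $N$, that the computation above is valid along the smooth portions of the flow, and that $I(t)$ does not jump upward at the discrete times where the surface is replaced by the boundary of its outward-minimizing hull. The last point uses that $|\Sigma_t|$ is continuous across such jumps, that the newly created portion of the surface carries $H = 0$ so that $\int_{\Sigma_t} fH\, d\mu$ can only decrease, and that $\int_{\Omega_t} f\, d\mbox{vol}$ is nondecreasing in $t$; these are exactly the mechanisms used in \cite{BHW}.
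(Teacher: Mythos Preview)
Your argument is essentially identical to the paper's: you set $I(t)=\bar Q(t)$, compute the evolution of $\int_{\Sigma_t} fH\,d\mu$ via the standard formula for $\partial_t H$, integrate by parts using $\Delta_g f=\Delta_{\Sigma_t}f+D^2f(\nu,\nu)+H\langle Df,\nu\rangle$ and the static equations so that the Ricci terms cancel, discard the traceless second fundamental form, apply the divergence theorem on $\Omega_t$ to rewrite $\int_{\Sigma_t}\langle Df,\nu\rangle$, and then invoke the Heintze--Karcher-type inequality (\ref{Heintze-Karcher}) to close the differential inequality $I'(t)\le\frac{n-2}{n-1}I(t)$; this is exactly what the paper does. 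Your closing paragraph on the weak level-set flow and jump behavior actually goes beyond the paper, which carries out only the smooth computation and does not discuss singularities.
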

\begin{proof}
The proof is almost identical to that of \cite[Proposition 19]{BHW}. For the reader's convenience, we include the proof here. 

The evolution of the mean curvature is given by
\[ \frac{\pl}{\pl t} H = - \Delta_\Sigma \lt( \frac{1}{H} \rt) - \frac{1}{H} \lt( |A|^2 + \mbox{Ric}(\nu,\nu) \rt). \]
This implies \[ \frac{\pl}{\pl t}(fH) = - f \Delta_\Sigma \lt( \frac{1}{H} \rt) - \frac{f}{H} \lt( |A|^2 + \mbox{Ric}(\nu,\nu) \rt) + \langle Df, \nu \rangle.\]
Using the identity $\Delta f = \Delta_\Sigma f + D^2 f(\nu,\nu) + H \langle Df, \nu \rangle$ and the static equations, we obtain 
\begin{align*}
\frac{d}{dt} \lt( \int_{\Sigma_t} fH d\mu \rt) &= -\int_{\Sigma_t} \Delta_\Sigma f \cdot \frac{1}{H} d\mu - \int_{\Sigma_t} \frac{f}{H} \lt( |A|^2 + \mbox{Ric}(\nu,\nu) \rt) d\mu + \int_{\Sigma_t} \lt(  \langle Df, \nu \rangle + fH \rt) d\mu\\
&= \int_{\Sigma_t} \lt( 2 \langle Df,\nu \rangle + fH - f \frac{|A|^2}{H} \rt)d\mu \\
&\le 
\int_{\Sigma_t} \lt( 2 \langle Df,\nu \rangle + \frac{n-2}{n-1} fH \rt) d\mu.
\end{align*}
Since $\Delta f = nf$, the divergence theorem implies
\[ \int_{\Sigma_t} \langle Df, \nu \rangle d\mu = n \int_{\Omega_t} f dvol + \kappa \cdot vol(N).\]
Moreover, by (\ref{Heintze-Karcher}) we have
\[ \frac{d}{dt}\lt( -n(n-1) \int_{\Omega_t} f dvol \rt) = -n(n-1) \int_{\Sigma_t} \frac{f}{H} \le -n^2 \int_{\Omega_t} f dvol - \frac{n(n-1)\kappa}{\max_N \lt( \frac{R^N + n(n-1)}{2} \rt)} \cdot vol(N.)\]

If we set 
\[ \bar{Q}(t) = \int_{\Sigma_t} fH d\mu - n(n-1)\int_{\Omega_t} f dvol + \frac{n-1}{n-2} \lt( 2 - \frac{n(n-1)}{\max_N \lt( \frac{R^N + n(n-1)}{2} \rt)}\rt) \kappa \cdot vol(N), \] 
the above facts together imply
\begin{align*}
\frac{d}{dt} \bar{Q}(t) \le \frac{n-2}{n-1} \bar{Q}(t)
\end{align*}
Since $|\Sigma_t| = e^t |\Sigma|$ under the inverse mean curvature flow, 
\[ \frac{d}{dt} Q(t) = \frac{d}{dt} \lt( |\Sigma_t|^{-\frac{n-2}{n-1}} \bar{Q}(t)\rt) \le 0. \]
\end{proof}
\section{A reverse Pensore inequality on asymptotically locally hyperbolic static manifolds}
Recall that we assume that $M$ has connected boundary. In this section, we further assume that $(M,g)$ is asymptotically hyperbolic.
\begin{defn}
$(M,g)$ is {\it conformally compact} if there exists a Riemannian manifold $(\bar{M}, \bar{g})$ such that
\begin{enumerate}
\item $\bar{M} = M \cup \pl_\infty M$,
\item there exists a defining function $\rho \in C^\infty(\bar{M})$ for $\pl_\infty M$. Namely, $\rho^{-1}(0) = \pl_\infty M$ and $d\rho \neq 0$ on $\pl_\infty M$.
\item $\bar{g} = \rho^2 g$ is a smooth metric on $\bar{M}.$
\end{enumerate} 
\end{defn}
It is well-known that $\bar{g}$ induces a well-defined conformal class on $\pl_\infty M$. 
\begin{defn}
A static manifold $(M,g,f)$ is {\it asymptotically locally hyperbolic} if
\begin{enumerate}
\item $(M,g)$ is conformally compact,
\item There is an Einstein metric $g_0$ in the conformal class $[\bar{g}|_{\pl_\infty M}]$. We normalize $g_0$ such that $Ric(g_0) = (n-2) k g_0$ with $k \in \{ -1,0,1\}$
\item $f^{-1}$ is a defining function and $f^{-2} g|_{\pl_\infty M} = g_0$.
\end{enumerate}
We say $(M,g,f)$ is {\it asymptotically hyperbolic} if the conformal boundary $(\pl_\infty M, [\bar{g}|_{\pl_\infty M}])$ is conformal to $(S^{n-1}, [g_0])$ where $g_0$ is the standard metric on $S^{n-1}$.  
\end{defn}
We remark that if $(M,g)$ is conformally compact, the sectional curvature approaches $-1$ at infinity. Therefore, an asymptotically hyperbolic static manifold must have negative cosmological constant.

\begin{Ex}\cite[Definition 1.2]{LN}
Let $(N,g_0)$ be an $(n-1)$-dimensional Einstein manifold  with $Ric(g_0) = (n-2)k g_0$. Let $m \in \R$ be large enough so that the function
\[ s^2 + k - 2m s^{2-n} =: V(r)^2 \] has a positive zero. Let $s_m $ be the largest zero of $V^2$, and define the metric
\[ \frac{1}{k + s^2 - 2m s^{2-n}} ds^2 + s^2 g_0 \]
defined on $(s_m, \infty) \times N$. Let $(M,g)$ be the metric completion of this Riemannian manifold. $(M,g,V)$ is a 1-parameter family of asymptotically locally hyperbolic static manifolds. We say that $(M,g,V)$ is a {\it Kottler space with conformal infinity $(N,g_0)$ and mass $m$.}

Let $\frac{\rho}{2} = \frac{1}{\sqrt{s^2 + k} + s}$. It is easy to see that the Kottler space with zero mass is conformally flat
\[ g = \rho^{-2} \lt( d\rho^2 + \lt( 1 - \frac{k\rho^2}{4} \rt)^2 g_0 \rt), V = \frac{1}{\rho} + \frac{k\rho}{4}. \]
\end{Ex} 

One implication of (\ref{Heintze-Karcher}) is a reverse Penrose inequality on asymptotically locally hyperbolic static manifold.
\begin{cor}[Corollary 1.3]
Let $(M,g,f)$ be an asymptotically locally hyperbolic static manifold with a connected boundary $N$. Assume (\ref{assumption, R^N}) holds on $N$. Then we have an upper bound for the mass of $(M,g)$:
\begin{align}
m \le \frac{\kappa}{(n-2)\,\omega_{n-1}} \lt( 1 - \frac{n-1}{\max_N \lt( \frac{R^N + n(n-1)}{2} \rt)}\rt) \, \mbox{vol}(N), 
\end{align}
where $\omega_{n-1}$ is the volume of the $(n-1)$-dimensional unit sphere.
\end{cor}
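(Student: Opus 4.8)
The plan is to deduce Corollary 1.3 from the geometric inequality (\ref{Heintze-Karcher}) by letting $\Sigma$ degenerate to the conformal infinity and extracting the ADM-type mass as the limiting coefficient. The key observation, already flagged in the introduction, is that the combination
\[
(n-1)\int_\Sigma \frac{f}{H}\,d\mu - \int_\Sigma \frac{\pl f}{\pl \nu}\,d\mu
\]
converges to (a negative multiple of) the mass as $\Sigma \to \pl_\infty M$, so I would first establish that asymptotic expansion and then compare it with the left-hand side of (\ref{Heintze-Karcher}).

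Concretely, I would proceed as follows. First, choose a natural exhaustion of $M$ by the level sets $\Sigma_t = \{ f = t\}$ (equivalently, large coordinate spheres $\{ s = \text{const}\}$ in the Kottler model chart), which are mean-convex for $t$ large because the static potential $f$ is proper and $|Df|\to\infty$. Second, use the asymptotically locally hyperbolic normal form — $f^{-1}$ a defining function, $f^{-2}g|_{\pl_\infty M} = g_0$ with $\mathrm{Ric}(g_0) = (n-2)kg_0$ — together with the static equations to write down the expansion of the induced metric, the mean curvature $H$, and $\pl f/\pl\nu$ on $\Sigma_t$ in powers of $1/t$. The Einstein condition on $g_0$ forces the Kottler form, so the subleading term in these expansions is governed by a single constant, which is precisely $2m$ (up to normalization); this is where the mass $m$ enters. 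Third, expand $n\int_{\Omega_t} f\,d\mathrm{vol}$ and $\int_\Sigma \pl f/\pl\nu\,d\mu = n\int_{\Omega_t} f\,d\mathrm{vol} + \kappa\,\mathrm{vol}(N)$ (the divergence theorem identity already used in the monotonicity corollary) to see the bulk terms cancel, leaving $(n-1)\int_{\Sigma_t} f/H\,d\mu - \int_{\Sigma_t}\pl f/\pl\nu\,d\mu \to -(n-2)\,\omega_{n-1}\,m$ after normalizing $g_0$ to the unit round metric (so $k=1$ and $\mathrm{vol}(\pl_\infty M) = \omega_{n-1}$). Fourth, pass to the limit in (\ref{Heintze-Karcher}): rewrite it as
\[
(n-1)\int_{\Sigma_t}\frac{f}{H}\,d\mu - \int_{\Sigma_t}\frac{\pl f}{\pl\nu}\,d\mu \;\ge\; \frac{(n-1)\kappa}{\max_N\big(\frac{R^N - \epsilon n(n-1)}{2}\big)}\,\mathrm{vol}(N) - \kappa\,\mathrm{vol}(N),
\]
and with $\epsilon = -1$ this reads
\[
-(n-2)\,\omega_{n-1}\,m + o(1) \;\ge\; \kappa\,\mathrm{vol}(N)\left(\frac{n-1}{\max_N\big(\frac{R^N + n(n-1)}{2}\big)} - 1\right),
\]
which upon sending $t\to\infty$ and rearranging is exactly the claimed bound
\[
m \le \frac{\kappa}{(n-2)\,\omega_{n-1}}\left(1 - \frac{n-1}{\max_N\big(\frac{R^N + n(n-1)}{2}\big)}\right)\mathrm{vol}(N).
\]

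The main obstacle is the asymptotic analysis in the third step: one must verify that the boundary integral $(n-1)\int_{\Sigma_t} f/H\,d\mu - \int_{\Sigma_t}\pl f/\pl\nu\,d\mu$ is actually \emph{convergent} as $t\to\infty$ and identify its limit with a constant multiple of the mass. This requires controlling the expansions of $H$ and of the area element on $\Sigma_t$ to sufficiently high order — the $O(1/t)$ and potentially $O(\log t / t)$ corrections — using the full strength of the static equations and the conformally compact normal form, and checking that the potentially divergent leading pieces cancel against the bulk term $n\int_{\Omega_t} f\,d\mathrm{vol}$ exactly as in the classical derivation of the hyperbolic mass. I would handle this by working in the Kottler coordinate $s$ where $g = V^{-2}ds^2 + s^2 g_0$ with $V^2 = k + s^2 - 2ms^{2-n}$ as the reference, treating the general asymptotically locally hyperbolic static manifold as a perturbation whose mass is defined via this comparison, and then the required expansions are explicit. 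The remaining steps — mean-convexity of large $\Sigma_t$, validity of the divergence identity, and the algebraic rearrangement — are routine given the earlier results in the paper.
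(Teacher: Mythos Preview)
Your proposal is correct and follows essentially the same route as the paper: apply (\ref{Heintze-Karcher}) to a family of large hypersurfaces exhausting the end, rewrite $n\int_\Omega f\,d\mathrm{vol}$ as $\int_\Sigma \pl f/\pl\nu\,d\mu - \kappa\,\mathrm{vol}(N)$ via the divergence theorem and $\Delta f = nf$, and then use the asymptotic expansions to identify the limit of $(n-1)\int_\Sigma f/H - \int_\Sigma \pl f/\pl\nu$ with $-(n-2)\omega_{n-1}m$. The only minor difference is that the paper works directly with the defining function $\rho$ and the explicit normal-form expansions $f = \rho^{-1} + \tfrac{k\rho}{4} + \tfrac{\alpha}{2}\rho^{n-1} + o(\rho^{n-1})$, $h_\rho = (1-\tfrac{k\rho^2}{4})^2 g_0 + \tfrac{\tau}{n}\rho^n + o(\rho^n)$ derived in \cite{W}, rather than your proposed Kottler-coordinate perturbation setup; this makes the asymptotic cancellation you flag as the ``main obstacle'' a short explicit computation.
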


\begin{proof}
We identify the manifold with $(0,\epsilon) \times S^{n-1}$ near the conformal boundary. There exists a defining function $\rho$ such that the metric is given by
\[ g = \rho^{-2} \lt( d\rho^2 + h_\rho \rt). \]
By analyzing the static equations (see \cite[page 919]{W}), we have the expansion of the static potential and $h_\rho$ near the conformal boundary:
\begin{align}
f &= \frac{1}{\rho} + \frac{k\rho}{4} + \frac{\alpha}{2} \rho^{n-1} + o(\rho^{n-1}), \label{expansion f}\\
h_\rho &= \lt( 1 - \frac{k\rho^2}{4}\rt)^2 g_0 + \frac{\tau}{n} \rho^n + o(\rho^{n})\label{expansion h_rho},
\end{align}
where $\tau$ is a symmetric two-tensor on $S^{n-1}$ and $\alpha = - \frac{\mxtr_{g_0}{ \tau}}{n}$. Note that our $\tau$ differs from equation (17) in \cite{W} by a factor of $n$.

When $\rho$ is sufficiently small, the level set $\Sigma_\rho$ would have positive mean curvature with respect to $-\frac{\pl}{\pl \rho}$. Applying (\ref{Heintze-Karcher}) to such $\Sigma_\rho$ implies that
\[ (n-1) \int_{\Sigma_\rho} \frac{f}{H} d\mu - \int_{\Sigma_\rho} \frac{\pl f}{\pl \nu} d\mu \ge \kappa \lt( -1 + \frac{n-1}{\max_N \lt( \frac{R^N + n(n-1) }{2} \rt)} \rt) \mbox{vol}(N).\]
Here we have used the equation $\Delta_g f = nf$. 
Let $\gamma = d\rho^2 + h_\rho$. We compute the mean curvature of $\Sigma_\rho$ with respect to the outward normal $-\frac{\pl}{\pl\rho}$. Let $H_g$ and $H_\gamma$ denote the mean curvature in the ambient metric $g$ and $\gamma$ respectively. We have
\[ H_g = \rho H_\gamma + (n-1) \]
and
\[ H_\gamma = -\frac{1}{2} \lt( h_\rho^{-1} \rt)^{ij}  
\frac{\pl (h_\rho)_{ij}}{\pl\rho} . \]
By (\ref{expansion h_rho}), we have
\[ \frac{n-1}{H_g} = \frac{1 - \frac{k\rho^2}{4}}{1 + \frac{k\rho^2}{4}} + \frac{\mxtr_{g_0} \tau}{2(n-1)} \rho^n + o(\rho^n). \]
On the other hand, $\frac{\pl f}{\pl \nu} = -\rho \frac{\pl f}{\pl \rho} + o(\rho^{n-1})$ on $\Sigma_\rho$. By (\ref{expansion f}), we obtain on $\Sigma_\rho$,
\begin{align*}
(n-1) \frac{f}{H} + \rho \frac{\pl f}{\pl \rho} = \frac{(2-n)}{2(n-1)} (\mxtr_{g_0} \tau) \rho^{n-1} + o(\rho^{n-1}).
\end{align*}
Recall that the mass of $(M,g)$ is defined as 
\begin{align}
 m = \frac{1}{2(n-1) \omega_{n-1}} \int_{S^{n-1}} \mxtr_{g_0} \tau \,d\mu_{g_0}. \label{mass def}
\end{align}
Letting $\rho \rw 0$, we get
\[ (2-n) \omega_{n-1} \cdot m \ge \kappa \lt( -1 + \frac{n-1}{\max_N \lt( \frac{R^N + n(n-1) }{2} \rt)} \rt) \mbox{vol}(N). \]
This completes the proof .
\end{proof}

\begin{rmk}
We compare our result with that of Chru\'{s}ciel-Simon. When $n=3$, under the assumption of Corollary \ref{reverse Penrose}, the inequality in Theorem 1.5 of \cite{CS} (see also \cite[Theorem 2.5]{LN}) reads 
\[ m \le m_0 \]
where $m_0$ is the mass of the reference Kottler space that has the same conformal infinity and surface gravity $\kappa$. Denote $s_{m_0}$ by $s_0$. We have $m_0 = \frac{1}{2}(k s_0 + s_0^3)$ and $\kappa = \frac{3}{2} s_0 + \frac{k}{2s_0}$. Moreover, the horizon of the reference has area $4\pi s_0^2$ and constant scalar curvature $R_0 = 2 ks_0^{-2}$. Simple algebra shows 
\[ m_0 = \frac{\kappa}{4\pi} \lt( 1 - \frac{2}{\frac{R_0+6}{2}}\rt) area(N_0). \]
\end{rmk}

We note that the above corollary implies a simple case of \cite[Theorem I.3]{CS}.
\begin{cor}
Let $(M,g,f)$ be an asymptotically locally hyperbolic static manifold with positive ADM mass. Then $\pl M \neq \phi$. 
\end{cor}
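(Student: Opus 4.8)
The plan is to argue by contrapositive: assume $\partial M = \phi$ and show that the ADM mass cannot be positive. When $\partial M = \phi$, the static manifold $(M,g,f)$ has no horizon, hence the surface gravity contribution is absent; the right-hand side of the reverse Penrose inequality in Corollary \ref{reverse Penrose} degenerates. Concretely, one would first recall that an asymptotically locally hyperbolic static manifold with empty boundary and $f > 0$ everywhere must in fact be rigid: the maximum principle applied to $\Delta_g f = n f$ (with $\epsilon = -1$, so $n\epsilon = -n$, giving $\Delta_g f = n f$ in the sign convention of Section 4) together with $f \to \infty$ at infinity forces $f$ to have no interior minimum that would be problematic, and in fact by Boucher--Gibbons--Horowitz \cite{BGH}, Qing \cite{Q}, and Wang \cite{W}, the only such solution is hyperbolic space itself, for which $m = 0$.

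The cleaner route, which I would actually carry out, is to avoid invoking the full classification and instead run the proof of Corollary \ref{reverse Penrose} directly in the case $\partial M = \phi$. One takes the sequence of level sets $\Sigma_\rho$ near the conformal boundary exactly as in that proof. The geometric inequality (\ref{Heintze-Karcher}) was established for $\Sigma$ homologous to a boundary component $N$; when $\partial M = \phi$ and $\Sigma_\rho$ bounds a domain $\Omega_\rho$ (which it does, since it is a level set of the defining function near a connected conformal infinity and $M$ is, after removing the large end, a compact manifold with boundary $\Sigma_\rho$), one instead uses the first inequality $(n-1)\int_{\Sigma_\rho} \frac{f}{H}\, d\mu \ge n \int_{\Omega_\rho} f\, d\mathrm{vol}$, valid for null-homologous $\Sigma$ on static manifolds by Eichmair's observation. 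Combining this with $\Delta_g f = n f$ and the divergence theorem (no boundary term now, since $\partial M = \phi$ and $\Omega_\rho$ is bounded by $\Sigma_\rho$ alone), one gets
\[
(n-1)\int_{\Sigma_\rho}\frac{f}{H}\,d\mu - \int_{\Sigma_\rho}\frac{\partial f}{\partial\nu}\,d\mu \ge 0.
\]
Letting $\rho \to 0$ and using exactly the asymptotic expansions (\ref{expansion f}), (\ref{expansion h_rho}) and the computation in the proof of Corollary \ref{reverse Penrose}, the left-hand side tends to $(2-n)\,\omega_{n-1}\, m$. Since $n \ge 3$ the coefficient $2-n$ is negative, so $m \le 0$, contradicting the assumed positivity of $m$. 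Hence $\partial M \ne \phi$.

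The main obstacle I anticipate is a bookkeeping point rather than a deep one: making sure that the null-homologous version of Brendle's inequality (the one attributed to Eichmair in the footnote of \cite{B}) genuinely applies to $\Sigma_\rho$ in the asymptotically locally hyperbolic setting without a horizon, i.e. that $\Sigma_\rho = \partial\Omega_\rho$ with $\Omega_\rho$ compact and $f$ positive on $\bar\Omega_\rho$, and that $\Sigma_\rho$ has positive mean curvature with respect to the outward normal $-\partial/\partial\rho$ for $\rho$ small (this last fact is already noted in the proof of Corollary \ref{reverse Penrose}). One should also double-check that no surface-gravity term sneaks in: with $\partial M = \phi$ there is no horizon $N$, $\kappa$ is undefined, and the divergence theorem $\int_{\Sigma_\rho}\langle Df,\nu\rangle\, d\mu = n\int_{\Omega_\rho} f\, d\mathrm{vol}$ has no $\kappa\cdot\mathrm{vol}(N)$ correction. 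Once these structural points are pinned down, the limiting computation is verbatim from the previous corollary.
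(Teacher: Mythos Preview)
Your proposal is correct and follows precisely the argument the paper has in mind: the paper offers no explicit proof, merely noting that the result follows from ``the above corollary,'' and the natural way to read this is exactly your contrapositive argument---rerun the proof of Corollary~\ref{reverse Penrose} with $\partial M=\phi$, using the null-homologous version (\ref{null-homologous}) of Brendle's inequality (Eichmair's observation) in place of (\ref{Heintze-Karcher}), so that the divergence theorem produces no $\kappa\cdot\mathrm{vol}(N)$ term and the limiting computation gives $(2-n)\omega_{n-1}m\ge 0$, hence $m\le 0$. Your remark that the rigidity route via \cite{BGH,Q,W} only covers the asymptotically hyperbolic (spherical infinity) case, and thus is insufficient for the general locally hyperbolic statement, is also well taken.
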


\section{Static manifolds with positive cosmological constant}
When $0 < m < \frac{2}{n-2}\lt( \frac{n-2}{n} \rt)^{n/2}$ , $f^2 = 1-s^2-ms^{2-n}$ has two positive roots $s_1 < s_2$. With $m$ in this range, de Sitter-Schwarzschild manifold with parameter $m$ is defined as a warped product on $M = [s_1,s_2] \times S^n-1$ , $g = \frac{1}{f^2} ds^2 + s^2 g_0$ with potential $f$. The surface gravity on $N_1 = \{ s=s_1\}$ and $N_2= \{ s=s_2\}$ are
\[ \kappa_1 =  -s_1 + \frac{(n-2)m}{2} s_1^{1-n}, \quad \kappa_2 = s_2 - \frac{(n-2)m}{2} s_2^{1-n} \]
respectively. Note the sign difference. Straightforward computation shows that
\[ \kappa_1 \lt( \frac{2(n-1)}{R^{N_1} - n(n-1)} + 1\rt) \cdot vol(N_1) = \kappa_2 \lt( \frac{2(n-1)}{n(n-1) - R^{N_2}} -1 \rt) \cdot vol(N_2) = \frac{(n-2)m}{2} \omega_{n-1}. \]

Using (\ref{Heintze-Karcher}), the above identity becomes an inequality on static manifolds with two horizons. 
\begin{cor}[Corollary 1.5] Let $(M,g,f)$ be a static manifold with positive cosmological constant. Suppose that $M$ is diffeomorphic to $N \times [s_1, s_2]$. Let $N_1 = N \times \{ s_1\}$ and $N_2 = N \times \{ s_2\}$ be the two connected components of $\pl M$. Assume that \[ R^{N_1} > n(n-1)
\quad\mbox{ and} \quad R^{N_2}< n(n-1). \] Then we have
\begin{align}
\kappa_2 \int_{N_2} \lt( \dfrac{2(n-1)}{n(n-1) - R^{N_2}} -1 \rt) d\mu  \ge \kappa_1 \lt( \frac{2(n-1)}{\max_{N_1} \lt( R^{N_1} - n(n-1) \rt)} +1 \rt) \cdot vol(N_1).
\end{align}
Here $\kappa_1$ and $\kappa_2$ are the surface gravity $|Df|_g$ on $N_1$ and $N_2$ respectively.
\end{cor}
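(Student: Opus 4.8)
The plan is to apply the geometric inequality (\ref{Heintze-Karcher}), with the role of the horizon played by $N_1$, to the level sets of the distance function to $N_2$, and then to let these level sets converge to $N_2$.

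First I would check that these level sets are admissible competitors in (\ref{Heintze-Karcher}). Let $r$ denote the distance from $N_2$, let $\Sigma_r = \{ r = \mbox{const}\}$, and let $\Omega_r$ be the region bounded by $\Sigma_r$ together with $N_1$. Since $M$ is diffeomorphic to $N\times[s_1,s_2]$, each $\Sigma_r$ is embedded and homologous to $N_1$, $\pl\Omega_r = \Sigma_r\cup N_1$, and the outward unit normal of $\Omega_r$ along $\Sigma_r$ is $-\pl/\pl r$. As in Section 2, the Gauss equation together with $R = n(n-1)$ and the fact that $N_2$ is totally geodesic gives $\mxRic(\tfrac{\pl}{\pl r},\tfrac{\pl}{\pl r})|_{N_2} = \tfrac12\bigl( n(n-1) - R^{N_2}\bigr) > 0$ by hypothesis, so the second variation formula $\pl_r H = -|A|^2 - \mxRic(\tfrac{\pl}{\pl r},\tfrac{\pl}{\pl r})$ and $H|_{r=0} = 0$ show that $\Sigma_r$ has positive mean curvature with respect to $-\pl/\pl r$ for all small $r > 0$. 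Hence (\ref{Heintze-Karcher}), applied with $N = N_1$, yields
\begin{align*}
(n-1)\int_{\Sigma_r}\frac{f}{H}\,d\mu \ge n\int_{\Omega_r} f\,d\mbox{vol} + \frac{(n-1)\kappa_1}{\max_{N_1}\lt(\frac{R^{N_1} - n(n-1)}{2}\rt)}\cdot\mbox{vol}(N_1)
\end{align*}
for every sufficiently small $r > 0$.

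Next I would let $r\rw 0$. From the expansions $f = \kappa_2 r + O(r^3)$ and $g = dr^2 + \wt g + r^2 g^{(2)} + O(r^3)$ near $N_2$, and from $H = \mxRic(\tfrac{\pl}{\pl r},\tfrac{\pl}{\pl r})\,r + O(r^2)$ with respect to $-\pl/\pl r$ (once more from $H|_{r=0}=0$ and the second variation formula), the quotient $f/H$, viewed as a function on $N_2$ via the normal flow, converges uniformly to $\kappa_2/\mxRic(\tfrac{\pl}{\pl r},\tfrac{\pl}{\pl r}) = 2\kappa_2/\bigl(n(n-1) - R^{N_2}\bigr)$, while $d\mu_{\Sigma_r}\rw d\mu_{N_2}$; therefore the left-hand side above tends to $2(n-1)\kappa_2\int_{N_2}\bigl(n(n-1) - R^{N_2}\bigr)^{-1}d\mu$, and $\int_{\Omega_r} f\,d\mbox{vol}\rw\int_M f\,d\mbox{vol}$. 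Finally, since $\Delta_g f = -nf$ and $f$ vanishes on $\pl M$ while being positive in the interior, $\langle Df,\nu\rangle = -\kappa_i$ along $N_i$ for the outward normal $\nu$, so the divergence theorem gives $n\int_M f\,d\mbox{vol} = \kappa_1\,\mbox{vol}(N_1) + \kappa_2\,\mbox{vol}(N_2)$. Substituting these facts into the limiting inequality, moving the term $\kappa_2\,\mbox{vol}(N_2) = \kappa_2\int_{N_2} d\mu$ to the left-hand side, and collecting the $\kappa_1$-terms on the right produces precisely the asserted inequality.

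The step requiring the most care is the limit $r\rw 0$ of $\int_{\Sigma_r} f/H\,d\mu$: both $f$ and $H$ vanish on $N_2$, so the convergence of the quotient depends on $\mxRic(\tfrac{\pl}{\pl r},\tfrac{\pl}{\pl r})$ being bounded below by a positive constant on $N_2$ --- which holds by compactness together with the strict hypothesis $R^{N_2} < n(n-1)$ --- and on the first-order expansion of $H$ along $\Sigma_r$ being uniform in the base point. Everything else is a bookkeeping exercise with the divergence theorem and an algebraic rearrangement.
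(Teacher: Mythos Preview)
Your proof is correct and follows essentially the same approach as the paper: apply (\ref{Heintze-Karcher}) with $N=N_1$ to the distance level sets $\Sigma_r$ near $N_2$, pass to the limit $r\to 0$ using the expansions $f=\kappa_2 r+O(r^3)$ and $H=\tfrac12(n(n-1)-R^{N_2})\,r+O(r^2)$, and evaluate $n\int_M f$ via the divergence theorem. Your write-up is in fact more careful than the paper's, which records the divergence identity with a sign error; your version $n\int_M f\,d\mbox{vol}=\kappa_1\,\mbox{vol}(N_1)+\kappa_2\,\mbox{vol}(N_2)$ is the correct one.
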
     
\begin{proof}
Let $r$ be the distance function from $N_2$ and $\Sigma_r$ be the level sets of $r$. Since $R^{N_2} < n(n-1)$, $\Sigma_r$ has positive mean curvature with respect to $-\frac{\pl}{\pl r}$ when $r$ is sufficiently small. Since $\Delta_g f = -nf$, the divergence theorem implies 
\[ n \int_M f = -\kappa_1 \cdot vol(N_1) - \kappa_2 \cdot vol(N_2)\] Recall that facts $f = \kappa_2 r + O(r^3)$ and $H = \frac{R^{N_2} - n(n-1)}{2} r + O(r^2)$. The assertion follows by applying (\ref{Heintze-Karcher}) to $\Sigma_r$ and letting $r \rw 0$.  
\end{proof}

\end{document}